\documentclass[10pt, english]{article}

%\fontsize{11}{12}\selectfont

\usepackage[margin= 2.3 cm]{geometry}
\usepackage{authblk}
\usepackage{amsthm}
\usepackage{amsmath}
\usepackage{amssymb}
\usepackage{setspace}
\usepackage{mathtools}
\usepackage{verbatim}
\usepackage{csquotes}
\usepackage{graphicx}
\usepackage[hidelinks]{hyperref}
\usepackage{thm-restate}
\usepackage{cleveref}
\usepackage{graphicx}
\usepackage{appendix}
\usepackage[inline]{enumitem}
\usepackage{framed}
\usepackage{subcaption}

\usepackage{caption}

\usepackage{floatrow}
\usepackage[T1]{fontenc}

\usepackage{tikz}
\usepackage{mathdots}
\usepackage{xcolor}
\usepackage{diagbox}
\usepackage{colortbl}
\usepackage[absolute,overlay]{textpos}

%\graphicspath{ {./images/} }

\usetikzlibrary{calc}
\usetikzlibrary{decorations.pathreplacing}
\usetikzlibrary{positioning,patterns}
\usetikzlibrary{arrows,shapes,positioning}
\usetikzlibrary{decorations.markings}

\tikzstyle{edge}=[very thick]
\definecolor{bostonuniversityred}{rgb}{0.8, 0.0, 0.0}
\definecolor{arsenic}{rgb}{0.23, 0.27, 0.29}
\tikzstyle{diredge}=[postaction={decorate,decoration={markings,
		mark=at position .65 with {\arrow[scale = 1]{stealth};}}}]

\setlength{\parskip}{\medskipamount}
\setlength{\parindent}{0pt}

\addtolength{\intextsep}{0pt} %space between text and figures
\addtolength{\abovecaptionskip}{2pt}
\addtolength{\belowcaptionskip}{-2pt}
\captionsetup{width=0.8\textwidth, labelfont=bf, parskip=5pt}

\setstretch{1.1}

\theoremstyle{plain}

\newtheorem*{thm*}{Theorem}
\newtheorem{thm}{Theorem}
\Crefname{thm}{Theorem}{Theorems}
%\numberwithin{thm}{section}

\newtheorem*{lem*}{Lemma}

\Crefname{lem}{Lemma}{Lemmas}

\newtheorem*{claim*}{Claim}

\Crefname{claim}{Claim}{Claims}
\Crefname{claim}{Claim}{Claims}

\newtheorem{prop}[thm]{Proposition}
\Crefname{prop}{Proposition}{Propositions}

\Crefname{cor}{Corollary}{Corollaries}

\Crefname{conj}{Conjecture}{Conjectures}

\Crefname{qn}{Question}{Questions}

\Crefname{obs}{Observation}{Observations}

\Crefname{ex}{Example}{Examples}

\theoremstyle{definition}

\Crefname{prob}{Problem}{Problems}

\Crefname{defn}{Definition}{Definitions}

\newtheorem*{defn*}{Definition}

\theoremstyle{remark}

\renewenvironment{proof}[1][]{\begin{trivlist}
\item[\hspace{\labelsep}{\bf\noindent Proof#1.\/}] }{\qed\end{trivlist}}

\newcommand{\R}{\mathbb{R}}

\expandafter\def\expandafter\normalsize\expandafter{%
    \normalsize
    \setlength\abovedisplayskip{8pt}
    \setlength\belowdisplayskip{8pt}
    \setlength\abovedisplayshortskip{4pt}
    \setlength\belowdisplayshortskip{4pt}
}

\usepackage[square,sort,comma,numbers]{natbib}
\setlength{\bibsep}{1 pt plus 20 ex}

 \setlist[itemize]{leftmargin=*}

\DeclareFontFamily{OT1}{pzc}{}
\DeclareFontShape{OT1}{pzc}{m}{it}{<-> s * [1.10] pzcmi7t}{}
\DeclareMathAlphabet{\mathpzc}{OT1}{pzc}{m}{it}

\DeclareMathOperator{\spn}{span}

%\title{Explicit unit distance graphs with exponential \\ chromatic number and arbitrary girth}
\title{Geometric graphs with exponential \\ chromatic number and arbitrary girth}

\author{}
 \author[1]{Matija Bucić \thanks{Research supported in part by an NSF Grant DMS--2349013.}}
\author[2]{James Davies}

 \affil[1]{Princeton University, USA}
 \affil[2]{University of Cambridge, UK.}

\date{}

%\sloppy

\begin{document}

\maketitle

\begin{abstract}
    In 1975 Erd\H{o}s initiated the study of the following very natural question. What can be said about the chromatic number of unit distance graphs in $\mathbb{R}^2$ that have large girth? Over the years this question and its natural extension to  $\mathbb{R}^d$ attracted considerable attention with the high-dimensional variant reiterated recently by Alon and Kupavskii.
    
    We prove that there exist unit distance graphs in $\mathbb{R}^d$ with chromatic number at least $(1.074 + o(1))^d$ that have arbitrarily large girth. This improves upon a series of results due to Kupavskii; Sagdeev; and Sagdeev and Raigorodskii and gives the first bound in which the base of the exponent does not tend to one with the girth.
    In addition, our construction can be made explicit which allows us to answer in a strong form a question of Kupavskii.
    
    Our arguments show graphs of large chromatic number and high girth exist in a number of other geometric settings, including diameter graphs and orthogonality graphs.
\end{abstract}

\section{Introduction}
Chromatic number and girth are perhaps two of the most fundamental graph parameters, the study of which has attracted a vast amount of attention over the years. The \emph{chromatic number} of a graph $G$ (denoted by $\chi(G)$) is the minimum number of colours needed to colour its vertices so that no two adjacent vertices receive the same colour (such a colouring is referred to as \emph{proper}),  
and the \emph{girth} of a graph is equal to the length of a shortest cycle in the graph. One of the most classical directions of study of these two parameters explores the relation between them. In particular, the question of whether one can obtain some amount of control of the chromatic number in terms of the girth has a very long and illustrious history. Indeed, at first glance, it is highly non-trivial to construct graphs with very high chromatic number, where this can not be witnessed locally. The local behaviour is captured quite well by the assumption of having large girth since this is equivalent to saying that the graph needs to locally look like a forest and any forest has chromatic number at most two. 

That graphs with arbitrarily large girth and chromatic number do exist is a classical result of Erd\H{o}s \cite{erdos59-high-girth-high-chromatic-graph} from 1958, which features in essentially any course on the probabilistic method in combinatorics. Numerous alternative constructions with a variety of interesting further features were found over the years. Let us highlight the first explicit construction due to Lov\'asz from 1966 \cite{lovasz-explicit}, the extension to hypergraphs by Erd\H{o}s and Hajnal \cite{erdos-hajnal66-high-girth-high-chromatic-hypergraph} from the same year and the construction of Ramanujan graphs by Lubotzky, Phillips and Sarnak \cite{ramanujan} from 1988, which besides high girth and chromatic number have a variety of other useful features associated with expander graphs and have had a tremendous impact in a variety of areas over the years. We point the reader to surveys by Ne\u{s}et\u{r}il \cite{nesetril-survey} and Raigorodskii and Shabanov \cite{raigorodskii-shabanov-survey} collecting just some of the very rich history surrounding this question. 

The study of the chromatic number of a variety of geometric graphs is equally as classical a topic. For example, the Hadwiger-Nelson problem \cite{hadwiger-nelson}, dating back to 1944, asks for the maximum chromatic number of a unit distance graph in $\mathbb{R}^d$ while the classical Borsuk Problem \cite{borsuk}, dating back to 1933, asks for the maximum chromatic number of a diameter graph in $\mathbb{R}^d$. 
Here a graph is said to be a \emph{unit distance} graph in $\mathbb{R}^d$ if we can assign its vertices to distinct points in $\mathbb{R}^d$ in such a way that any pair of adjacent vertices are assigned to points at a unit distance\footnote{We note that here and throughout the paper we do not insist that this assignment is faithful, namely we do not require that any pair of points at distance one actually correspond to an edge in the graph, we discuss this distinction further in \Cref{sec:con}.}. A graph is said to be a \emph{diameter graph} in $\mathbb{R}^d$ if we can assign its vertices to distinct points in $\mathbb{R}^d$ in such a way that any pair of adjacent vertices are assigned to a pair of points at the maximum distance. We note that in both settings one may assume graphs involved are finite via the classical de-Bruijn-Erd\H{o}s Compactness Theorem \cite{de-Bruijn-Erdos}. 

In the unit distance setting, in their remarkable 1981 paper essentially introducing the algebraic method in combinatorics, Frankl and Wilson \cite{frankl-wilson} proved a conjecture of Erd\H{o}s that there exist unit distance graphs in $\mathbb{R}^d$ with chromatic number exponential in $d$, improving upon a sequence of progressively better polynomial bounds (see e.g.\ the survey \cite{hadwiger-nelson-survey} for the full history). The Frankl-Wilson construction has been slightly improved since by Raigorodskii \cite{raigorodskii-bound} who holds the current record of $\chi(G) \ge (1.239+o(1))^d$ for a unit distance graph in $\mathbb{R}^d$. The best known upper bound $(3+o(1))^d$ is due to Larman and Rogers \cite{larman-rogers} (see also \cite{prosanov-hadwiger-nelson} for a more recent simpler proof due to Prosanov). 
%$V \subseteq $\mathbb{R}^d$ and $E\subseteq \{xy : x,y\in V, \|x-y\|=1\}$.

In the diameter graph setting, no bound beyond $\chi(G) = d+1$, obtained by the regular simplex, was known. It was widely believed and is referred to as the Borsuk Conjecture that $\chi(G) \le d+1$ for any diameter graph in $\mathbb{R}^d$, based in part on numerous results establishing this for special configurations of points. This was disproved in a stunning fashion by Kahn and Kalai in \cite{kahn-kalai} who constructed diameter graphs with chromatic number growing exponentially in $\sqrt{d}$. There have been several improvements in the base, as well as a considerable effort in trying to prove any non-trivial bounds for small $d$, but constructing a diameter graph with exponential chromatic number remains a very interesting open problem. The current record is $\chi(G) \ge (1.225+o(1))^{\sqrt{d}}$ for $G$ a diameter graph in $\mathbb{R}^d$, due to Raigorodskii \cite{borsuk-best}. The best known upper bound here is $(3/2)^{(1+o(1))d/2}$ due to Schramm \cite{schramm}, see also \cite{bourgain-lindenstrauss} for a different proof due to Bourgain and Lindenstrauss. See the survey \cite{borsuk-survey} for a more complete history and plenty of related problems. For example, in \cite{borsuk-alternative} the authors construct counterexamples to Borsuk Conjecture in any large enough dimension which are in a certain sense very different compared to the Kahn-Kalai construction. 

A very natural next question, with the goal of understanding further what kind of substructures one can find in geometric graphs, is to ask whether one can find geometric graphs of high chromatic number and high girth. This question is quite classical in its own right and traces its origins to a problem posed by Erd\H{o}s in 1975 \cite{erdos1975unsolved} asking whether a unit distance graph in $\mathbb{R}^2$ with chromatic number $4$ and girth at least $4$ exists. Soon after he reiterated the problem saying it is likely that such a graph does not exist. Wormald \cite{wormald} disproved this in 1978 by exhibiting such a graph on 6448 vertices. A construction on 23 vertices was found in 1996 by Hochberg and O'Donnell \cite{hochberg}. Erd\H{o}s in the same 1975 paper also asked the more general question of whether unit distance graphs in $\mathbb{R}^2$ with chromatic number $4$ and arbitrarily large girth exist. This was only settled in 1999 by O'Donnell \cite{odonell,odonnell2}, see also a recent survey of Graham \cite{graham}. Natural higher dimensional extensions were first considered by Raigorodskii \cite{raigorodskii07} and Rubanov  \cite{rubanov} in 2007. For more details and a plethora of related results see e.g.\ the survey by Raigorodskii \cite{raigorodski-survey}.

Kupavskii \cite{kupavskii2012distance} in 2012 proved that for each positive integer $g$ there exists some $c_g>1$ such that there are unit distance graphs in $\mathbb{R}^d$ with girth at least $g$ and chromatic number at least $(c_g+o(1))^d$. Alon and Kupavskii \cite{alon-kupavskii} reiterate the question of what can be said about the chromatic number of such graphs.
Sagdeev \cite{sagdeev2017lower} gave the first explicit estimate for $c_g$, namely that $c_g\ge 1 + 1/{g^{4+o(1)}}$. The dependency of $c_g$ on $g$ was subsequently improved to $c_g\ge 1 + 1/{g^{2+o(1)}}$ in \cite{sagdeev2018improved} and by a further constant factor (implicit in $1/{g^{2+o(1)}}$) by Sagdeev and Raigorodskii in \cite{sagdeev2019frankl}. We prove a first universal exponential lower bound, without any dependency on $g$.

\begin{thm}\label{main}
    There exist unit distance graphs in $\mathbb{R}^d$ with chromatic number at least $(1.074 + o(1))^d$ that have arbitrarily large girth.
\end{thm}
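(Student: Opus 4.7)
The plan is to start from an exponentially-chromatic unit distance graph in $\mathbb{R}^d$ of Frankl--Wilson / Raigorodskii type and to apply an Erd\H{o}s-style probabilistic alteration that removes all short cycles while losing only a subexponential factor in the chromatic number. The carrier of the lower bound throughout will be the fractional chromatic number, which behaves well under vertex-transitivity and random subsampling; the girth $g$ is a fixed constant independent of $d$, so there is plenty of ``dimensional room'' to absorb the cost of the alteration.

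First I would take a vertex-transitive unit distance graph $G \subseteq \mathbb{R}^d$ with $\chi_f(G) \ge c^d$ for some constant $c$ comfortably larger than $1.074$; the Raigorodskii construction provides such a $G$ with $c$ close to $1.239$. Vertex-transitivity gives the identity $\chi_f(G) = |V(G)|/\alpha(G)$, which is the source of the robustness I want to exploit. To create enough slack between the chromatic number and the short-cycle count, I would amplify by a product-type construction: take $k$ scaled copies of $G$ arranged along orthogonal subspaces of $\mathbb{R}^{kd}$ with edges given by a suitable multi-distance condition, producing a unit distance graph $H$ whose fractional chromatic number grows multiplicatively in $k$ via a standard vertex-transitive product argument.

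Next I would count the cycles of length at most $g$ in $H$ via straightforward degree-and-codegree estimates, then take a random induced subgraph $H[U]$ by keeping each vertex independently with probability $p$. For an appropriately chosen $p$, the expected number of surviving $\ell$-cycles scales like $p^\ell$ times the original count while $|U|$ scales like $p|V(H)|$, so for $p$ just below the critical threshold the expected number of short cycles becomes $o(p|V(H)|)$. Fixing a realization where this holds and removing one vertex per surviving short cycle, I obtain a unit distance graph in $\mathbb{R}^{kd}$ of girth greater than $g$, on $(1-o(1))p|V(H)|$ vertices; the vertex-transitive identity combined with a concentration bound for the independence number of random induced subgraphs then yields $\chi \ge \chi_f \ge (1.074 + o(1))^{kd}$, as required.

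The main obstacle is reconciling the amplification with the alteration. The raw Frankl--Wilson type graph has degree much larger than its chromatic number, so naive random vertex deletion destroys the exponential chromatic bound; the product amplification of Step 2 must therefore be engineered so that the ratio of $\chi_f$ to $(\text{short-cycle count})^{1/g}$ grows exponentially in the ambient dimension, which constrains the choice of product. The second delicate point is showing that $\chi_f$ is actually preserved (up to a $(1-o(1))$ factor) under random vertex-subsampling of $H$, for which the cleanest route is through careful control of the independence number of random induced subgraphs of vertex-transitive graphs---this is likely the most technical ingredient and may require a structural argument tailored to the specific base construction rather than a black-box result.
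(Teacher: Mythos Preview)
Your plan has a genuine gap, and it is precisely the one you flag in the last paragraph without resolving. Random vertex subsampling with probability $p$ followed by deleting one vertex per surviving short cycle can only work if $p$ is small enough that $\sum_{\ell\le g} p^{\ell}C_\ell = o(p\,|V(H)|)$; since $C_\ell$ is of order $|V(H)|\,\Delta(H)^{\ell-1}$, this forces $p\lesssim 1/\Delta(H)$. But for \emph{any} graph one has $\chi_f\le \Delta+1$, so after subsampling the bound $\chi_f(H[U])\ge |U|/\alpha(H)\approx p\,\chi_f(H)$ that you are relying on gives at best a constant. No ``product amplification'' can rescue this: whatever $H$ you build in $\mathbb{R}^{kd}$, the inequality $\chi_f(H)\le \Delta(H)+1$ still holds, so the ratio $\chi_f(H)/\Delta(H)$ never exceeds~$1$ and the alteration wipes out the chromatic number. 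Equivalently, the quantity you want to grow, $\chi_f/C_g^{1/g}$, is at most roughly $\chi_f/\Delta^{(g-1)/g}$, which for large $g$ is again bounded. Your hedge that $\chi_f$ might be preserved up to $1-o(1)$ under subsampling is false in general and you give no structural reason for it to hold here.

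What the paper does instead is avoid the $\chi_f\le\Delta+1$ barrier entirely. The key geometric observation you are missing is that for an orthogonality graph $G$ in $\mathbb{S}^{d-1}$, an \emph{arbitrary blow-up} $G^m$ is again an orthogonality graph in $\mathbb{S}^{2d-1}$: send each vertex $\mathbf v$ to $m$ distinct unit vectors in the plane $\spn\{(\mathbf v,\mathbf 0),(\mathbf 0,\mathbf v)\}\subseteq\mathbb{R}^{2d}$, and note that orthogonal $\mathbf u,\mathbf v$ yield orthogonal planes. A blow-up preserves $\chi$ exactly, and a classical result of Zhu (or an explicit Ne\v{s}et\v{r}il--R\"odl style construction) shows that inside $G^m$, for $m$ large, one can find a subgraph of girth $\ge g$ with the \emph{same} chromatic number as $G$. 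The only cost is the factor~$2$ in the dimension, turning Raigorodskii's base $2/\sqrt{3}\approx 1.1547$ into $\sqrt{2/\sqrt{3}}\approx 1.074$; crucially this cost is independent of $g$. The probabilistic step (in the self-contained version) is edge subsampling inside $G^m$, and the chromatic number is certified not by $|V|/\alpha$ but by a majority-colour argument that transfers any proper $(k{-}1)$-colouring of the sparse subgraph back to $G$, which is where your $\chi_f$ accounting goes wrong.
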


Our bound even improves the best-known bounds for small values of $g$, e.g.\ in perhaps the most classical case of lower bounding the chromatic number of unit distance triangle-free graphs in $\mathbb{R}^d$ the previously best bound was $(1.058+o(1))^d$ \cite{demekhin2012distance}.

Quite a bit of focus over the years was afforded to finding explicit constructions of high girth and high chromatic number graphs, ever since the first such construction was found by Lov\'asz in 1966 \cite{lovasz-explicit}, in large part motivated by the idea that such graphs should exhibit somewhat random-like behaviour so could be useful in a variety of derandomisation arguments.
Explicit constructions of graphs with large girth and chromatic number are now known in some geometric settings, including as box and line intersection graphs in $\mathbb{R}^3$ \cite{Davies2021Box}, as disjointedness graphs of short polygonal chains in the plane \cite{PACH202429}, and as tangency graphs of circles in the plane \cite{davies2021solution}.
Our construction behind Theorem \ref{main} is actually more flexible than previous, weaker ones in the unit distance graph setting and can be made explicit. This answers in quite a strong form a question of Kupavskii who asked to prove explicitly his result that unit distance graphs in $\mathbb{R}^d$ with girth at least $g$ and chromatic number at least $(c_g+o(1))^d$ exist for some $c_g>1$.

Our ideas can also be used to find graphs with high chromatic number and high girth in other geometric settings. The question of constructing high chromatic number and high girth diameter graphs in $\mathbb{R}^d$ for large $d$ has been raised recently by Prosanov \cite{prosanov2019counterexamples}, motivated by a result of Kupavskii and Polyanski \cite{schur}. We show that indeed strong counterexamples to Borsuk's Conjecture can be found with arbitrarily large girth. 
\begin{thm}\label{thm:diam}
    There exist diameter graphs in $\mathbb{R}^d$ with chromatic number at least $(1.107 + o(1))^{\sqrt{d}}$ that have arbitrarily large girth.
\end{thm}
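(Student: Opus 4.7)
The plan is to reduce Theorem~\ref{thm:diam} to an orthogonality-graph analogue of Theorem~\ref{main} via the Kahn--Kalai tensor trick. For unit vectors $v,w\in\mathbb{R}^{n}$, the tensors $v\otimes v,\,w\otimes w$ are themselves unit vectors in $\mathbb{R}^{n^{2}}$ and satisfy $\langle v\otimes v,\,w\otimes w\rangle=\langle v,w\rangle^{2}$, so the Euclidean distance $\|v\otimes v-w\otimes w\|$ attains its maximum value $\sqrt{2}$ precisely when $v\perp w$. Consequently any graph realised as an \emph{orthogonality graph} on a set of unit vectors in $\mathbb{R}^{n}$ is realised as a diameter graph in $\mathbb{R}^{n^{2}}$ via $v\mapsto v\otimes v$, with exactly the same edge set; in particular, chromatic number and girth are preserved, and the ambient dimension becomes $d=n^{2}$, so $n=\sqrt{d}$.

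It therefore suffices, for every positive integer $g$, to exhibit an orthogonality graph in $\mathbb{R}^{n}$ with girth at least $g$ and chromatic number at least $(1.107+o(1))^{n}$. I would start from a Frankl--Wilson / Kahn--Kalai-type base graph whose vertices are (normalised) $\pm 1$-vectors, or vectors on a Johnson-type sphere, with edges being the zero-inner-product pairs. A standard polynomial / linear algebra argument (the one underlying the Kahn--Kalai and Raigorodskii Borsuk bounds) gives an exponential lower bound on the chromatic number of this dense orthogonality graph; the number $1.107$ is essentially $\sqrt{1.225}$, reflecting the fact that after tensoring one recovers the exponent in Raigorodskii's record for Borsuk, with the gap explained by the girth cost we will have to pay next.

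The heart of the argument, and the main obstacle, is exactly as in Theorem~\ref{main}: destroying all cycles of length at most $g$ without collapsing the exponential chromatic number. I would apply the same girth-killing procedure developed in the proof of Theorem~\ref{main}---which is essentially combinatorial and only requires passing to random subsets and applying a random orthogonal (or permutation) shift to a large family of unit vectors---to the orthogonality graph above rather than to the unit-distance graph of Theorem~\ref{main}. The crucial point is that orthogonality, unlike unit distance, is invariant under every orthogonal transformation \emph{and} under the Kahn--Kalai tensor map, so no new short cycles are ever introduced when we finally embed via $v\mapsto v\otimes v$.

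Distinctness of the tensored image is ensured by restricting to vectors whose first nonzero coordinate is positive, and a short bookkeeping step verifies that the girth, chromatic, and dimension parameters combine to give diameter graphs in $\mathbb{R}^{d}$ of girth at least $g$ and chromatic number at least $(1.107+o(1))^{\sqrt{d}}$, as promised. The authors' remark in the introduction that ``our arguments show graphs of large chromatic number and high girth exist in a number of other geometric settings, including diameter graphs and orthogonality graphs'' confirms that the machinery behind Theorem~\ref{main} is sufficiently modular to be ported wholesale into the orthogonality setting in this way.
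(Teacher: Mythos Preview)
Your high-level plan---build high-girth orthogonality graphs on the sphere, then push them forward via $v\mapsto v\otimes v$ to obtain diameter graphs---is exactly the paper's route. Two linked details, however, are not right as stated, and together they account for the constant $1.107$.

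First, the girth-killing step cannot be done ``for free'' by random orthogonal (or permutation) shifts inside $\mathbb{S}^{n-1}$: if $O_i,O_j$ are orthogonal transformations and $v\perp w$, there is no reason for $O_iv\perp O_jw$, so this does not produce a blow-up. The paper's mechanism is to \emph{double the dimension}: for $v\in\mathbb{S}^{n-1}$ the plane $\spn\{(\textbf{v},\textbf{0}),(\textbf{0},\textbf{v})\}\subset\mathbb{R}^{2n}$ is orthogonal to the corresponding plane for any $w\perp v$, so one can place $m$ copies of each vertex on its plane and realise the full blow-up $G^{m}$ as an orthogonality graph in $\mathbb{S}^{2n-1}$; then one subsamples edges (or uses the explicit Ne\v{s}et\v{r}il--R\"odl-type construction) inside the blow-up to kill short cycles while keeping $\chi$. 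The price is that the base of the exponent drops from Raigorodskii's $2/\sqrt{3}\approx 1.1547$ to its square root $\approx 1.074$, so the high-girth orthogonality graphs you actually obtain have chromatic number $(1.074+o(1))^{n}$, not $(1.107+o(1))^{n}$ as you wrote.

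Second, tensoring these into $\mathbb{R}^{n^{2}}$ would then only give $(1.074+o(1))^{\sqrt{d}}$. The missing observation is that the symmetric tensors $v\otimes v$ all lie in a subspace of dimension $n+\binom{n}{2}\sim n^{2}/2$, so the diameter graph really lives in $\mathbb{R}^{d}$ with $d\sim n^{2}/2$; hence $n\sim\sqrt{2d}$ and the chromatic number is $(1.074+o(1))^{\sqrt{2d}}=(1.074^{\sqrt{2}}+o(1))^{\sqrt{d}}=(1.107+o(1))^{\sqrt{d}}$. Your identity $1.107\approx\sqrt{1.225}$ is numerically correct, but the square root is the cost of the dimension-doubling blow-up, not of the tensoring; the $\sqrt{2}$ that lifts $1.074$ back to $1.107$ comes from the symmetric-tensor dimension count, which your write-up omits.
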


Another setting, underlying and connecting the previous two results, is that of orthogonality graphs. Here a graph is said to be an \emph{orthogonality} graph in $S \subseteq \mathbb{R}^d$ if we can assign its vertices to distinct vectors in $S$ in such a way that any pair of adjacent vertices are assigned to orthogonal vectors. In this paper, we will mostly work with orthogonality graphs over spheres, here we denote by $\mathbb{S}^{d-1}\subseteq \mathbb{R}^d$ the $d-1$ dimensional unit sphere consisting of all vectors of unit length in $\mathbb{R}^d$. 

\begin{thm}\label{thm:orth}
    There exist orthogonality graphs in $\mathbb{S}^{d-1}$ with chromatic number at least $(1.074 + o(1))^d$ that have arbitrarily large girth.
\end{thm}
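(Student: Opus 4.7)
The plan is to construct an orthogonality graph on $\mathbb{S}^{d-1}$ with the stated properties by starting from a Frankl--Wilson--Raigorodskii style base and then passing to a carefully chosen subset to kill short cycles. Specifically, I would begin with the orthogonality graph $G_0$ on the normalised Boolean cube $V_0 = \{\pm 1\}^d/\sqrt d \subseteq \mathbb{S}^{d-1}$ (with $d$ even), whose edges are pairs of vectors agreeing in exactly $d/2$ coordinates. Classical linear-algebraic arguments give $\chi(G_0) \ge (c+o(1))^d$ for some explicit $c$ comfortably larger than $1.074$; the issue is that $G_0$ is rife with short cycles.

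The core step is to select a subset $V \subseteq V_0$ (or a slight perturbation thereof, possibly via random rotations of blocks of coordinates) such that the induced orthogonality graph $G = G_0[V]$ has girth at least $g$ while $\chi(G) \ge (1.074+o(1))^d$. Two natural implementations are (i) an \emph{explicit} choice, where $V$ is indexed by codewords of a suitable binary linear code with large dual distance so that orthogonality patterns on $V$ cannot form short closed walks; or (ii) a \emph{probabilistic} choice of $V$, followed by deletion of a single vertex from each short cycle.

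For the chromatic-number lower bound, I would use the quasirandom structure of $G_0$: independent sets are controlled by Frankl--Wilson type forbidden-intersection bounds, and these transfer to the restriction $G_0[V]$ with only $(1+o(1))^d$-factor losses. This gives $\chi(G) \ge |V|/\alpha(G) \ge (1.074+o(1))^d$, at which point the theorem follows.

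The main obstacle---and the essential novelty relative to Kupavskii, Sagdeev, and Sagdeev--Raigorodskii---is making the exponential base $1.074$ \emph{uniform} in $g$. Those earlier cycle-deletion arguments had $c_g \to 1$ because their surviving density decayed exponentially with $g$. To get $c_g \ge 1.074$ for every $g$, one needs a more robust mechanism, most plausibly either a blow-up or tensor-type construction combining $G_0$ with an auxiliary high-girth geometric object whose dimensional overhead is absorbed into the $o(1)$, or a nested family of codes parametrised by $g$ in the explicit version. Pinning down such an object and verifying that its chromatic number still decays at rate $1.074^d$ rather than at some $g$-dependent rate is the key technical hurdle of the proof.
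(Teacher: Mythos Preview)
Your plan correctly diagnoses the obstacle but does not supply the mechanism that resolves it, so as written it is not a proof. The two concrete strategies you propose in the second and third paragraphs---restricting to a subset $V\subseteq V_0$ chosen by codes, or taking a random subset and deleting a vertex from every short cycle---are essentially the route taken in the earlier work of Kupavskii, Sagdeev, and Sagdeev--Raigorodskii, and they fail for exactly the reason you state: the number of vertices that survive after killing all cycles of length $<g$ is only an $e^{-\Theta(d/g)}$ fraction of $|V_0|$, so the ratio $|V|/\alpha(G_0[V])$ has base tending to $1$ as $g\to\infty$. Your appeal to ``Frankl--Wilson bounds transfer to $G_0[V]$ with only $(1+o(1))^d$ losses'' is precisely the step that breaks: once $V$ is sparse enough to have girth $g$, the independence number bound $\alpha(G_0)\le(c'+o(1))^d$ no longer beats $|V|$ by an exponential factor independent of $g$.

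The missing idea, which the paper supplies, is that one should work with \emph{edge}-subgraphs of a \emph{blow-up} $G_0^{m}$, not vertex-subgraphs of $G_0$ itself, together with the geometric fact that arbitrary blow-ups of an orthogonality graph on $\mathbb{S}^{d-1}$ can be realised as orthogonality graphs on $\mathbb{S}^{2d-1}$: send each vertex $\mathbf{v}$ to the unit circle in the plane $\mathrm{span}\{(\mathbf{v},\mathbf{0}),(\mathbf{0},\mathbf{v})\}\subseteq\mathbb{R}^{2d}$, and note that orthogonal vertices give orthogonal planes. One then uses the purely combinatorial fact (Zhu) that for every graph $G$ and every $g$ there is an $m$ such that $G^{m}$ contains a subgraph of girth $\ge g$ with $\chi=\chi(G)$. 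Thus from Raigorodskii's orthogonality graph in $\mathbb{S}^{d-1}$ with $\chi\ge(2/\sqrt3+o(1))^d$ one obtains, in $\mathbb{S}^{2d-1}$, a graph of girth $\ge g$ and the \emph{same} chromatic number, i.e.\ $(\sqrt{2/\sqrt3}+o(1))^{2d}=(1.074+o(1))^{2d}$. Note in particular that the dimensional overhead is a fixed factor of $2$, not something absorbed into the $o(1)$; it is exactly this square root that produces the constant $1.074$. Your final paragraph gestures at ``a blow-up or tensor-type construction'', but without the observation that the blow-up costs only a doubling of the dimension and that a high-girth subgraph of $G^m$ retains $\chi(G)$ exactly, the argument is incomplete.
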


In \Cref{sec:self} we shall give a self-contained proof of \Cref{thm:orth} with a somewhat weaker bound for the base of the exponent. We also prove this weaker bound for \Cref{main} and \Cref{thm:diam} and more generally show that \Cref{thm:orth} implies \Cref{main} and \Cref{thm:diam}.
Then in \Cref{sec:black} we give some more general black-box arguments that can be combined with state-of-the-art bounds to obtain \Cref{thm:orth} (and thus \Cref{main} and \Cref{thm:diam}). In this section, we make our construction explicit by replacing a probabilistic sub-sampling step from \Cref{sec:self} with an explicit construction for the desired sub-sampling (see \Cref{lem:explicit blowup}).
Lastly, in \Cref{sec:con} we give some concluding remarks and discuss further open problems.

\section{A self-contained argument}\label{sec:self}
In this section, we will give a self-contained proof of  \Cref{thm:orth} with a slightly weaker bound, in the interest of keeping the arguments as simple as possible. The goal is to showcase the wonderful interplay between algebraic and probabilistic ideas happening in the background. Since many of these ideas are by now standard in the area, experts may wish to skip to the following section where we extract the main ideas in the hope of making them more easily applicable in other settings and use them combined with existing state-of-the-art results to prove the bounds stated in the previous section.

Let $g$ be an arbitrary integer and let $p>2$ be a prime. We let $d=8p$ and will construct a unit distance graph in $\mathbb{R}^d$ with girth at least $g$ and chromatic number at least $\left(1.067+o(1)\right)^d$.\footnote{We note that for $d$ not of this form, one could take $p$ to be the largest prime smaller than $d/8$ and use the known results on density of primes \cite{dusart} to conclude the loss involved in using the construction in $8p$ dimensions only impacts the $o(1)$ term.}

Let us now describe the construction. For a positive integer $m$, we denote the set $\{1, \ldots, m\}$ by $[m]$. In our construction, $m$ shall be some large integer to be chosen later. The vertex set of our graph will be 
$$V:=\left\{\left(\textbf{v}\cdot \cos \frac{2\pi j}{m},\textbf{v}\cdot \sin \frac{2\pi j}{m}\right) \mid \textbf{v} \in \{\pm1\}^{d/2}, \textbf{v}_{d/2}=1, \sum_{i=1}^{d/2} \textbf{v}_i = 0, j \in [m] \right\} \subseteq \R^d.$$
The edge set will be obtained by randomly subsampling orthogonal pairs of vectors in $V$. Let us however first introduce some notation. We let $V':= \{\textbf{v} \mid \textbf{v} \in \{\pm1\}^{d/2}, \textbf{v}_{d/2}=1, \sum_{i=1}^{d/2} \textbf{v}_i = 0\}$ and for $\textbf{v} \in V', j \in [m]$ we let $\textbf{v}(j)=\left(\textbf{v}\cdot \cos \frac{2\pi j}{m},\textbf{v}\cdot \sin \frac{2\pi j}{m}\right)$. So that $V= \{\textbf{v}(j)\mid \textbf{v}\in V', j \in [m] \} \subseteq \frac{d}{2}\cdot \mathbb{S}^{d-1}$. One can think of $V$ as consisting of $m$ spaced out rotations of each $\textbf{v} \in V'$ in the plane defined by $(\textbf{v},\textbf{0})$ and $(\textbf{0},\textbf{v})$ in $\R^d$.

The key property of $V$ is that if $\textbf{v}$ is orthogonal to $\textbf{u}$ for $\textbf{v},\textbf{u} \in V'$ then $\textbf{v}(i)$ is orthogonal to $\textbf{u}(j)$ for any $i,j \in [m]$. So if one denotes by $G'$ the graph with vertex set $V'$ and edge set being orthogonal pairs in $V'$, then the blow-up graph $G'^{m}$ (which is the graph obtained from $G'$ by replacing each vertex with an independent set of size $m$ and each edge with a $K_{m,m}$ subgraph between the two corresponding independent sets of size $m$) is an orthogonality graph in $\frac{d}{2}\cdot \mathbb{S}^{d-1}$ with vertex set $V$ (so also in $\mathbb{S}^{d-1}$ after scaling by a factor of $2/d$). Armed with this observation our plan is to show that $\chi(G')$ is exponential in $d$ and that thanks to that if we subsample the edges of $G'^{m}$ we can preserve the same chromatic number while removing all cycles of length shorter than $g$.

That the first property holds follows from a slightly modified argument of Nilli \cite{nilli}, which is, in turn, a slight modification of the one used by Frankl and Wilson \cite{frankl-wilson} when first establishing exponential growth of the chromatic number of the unit distance graph of $\R^d$. 

\begin{thm}
$\chi(G') \ge \left({27}/{16}+o(1)\right)^p \ge \left(1.067+o(1)\right)^d.$
\end{thm}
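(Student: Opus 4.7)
The plan is to upper bound the independence number of $G'$ via the Frankl--Wilson modular intersection theorem and then use $\chi(G') \ge |V'|/\alpha(G')$. The key structural feature that makes this work is the normalisation $v_{d/2}=1$ built into the definition of $V'$: this forces any two support sets to intersect in at least one point, which is precisely what rules out the offending residue class modulo $p$.

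First, I would identify each $v \in V'$ with the set $A_v := \{i \in [4p] : v_i = +1\}$. The constraint $\sum_i v_i = 0$ gives $|A_v|=2p$, and $v_{4p}=1$ forces $4p \in A_v$. A routine calculation yields $\langle u,v\rangle = 4|A_u \cap A_v| - 4p$, so $u \perp v$ if and only if $|A_u \cap A_v| = p$.

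Next, let $I \subseteq V'$ be any independent set of $G'$. For distinct $u,v \in I$ we have $|A_u \cap A_v|\neq p$, together with $1 \le |A_u \cap A_v| \le 2p-1$ (lower bound because $4p \in A_u \cap A_v$, upper bound because $|A_u|=|A_v|=2p$ and $A_u\neq A_v$). Hence $|A_u \cap A_v| \pmod p \in \{1,\ldots,p-1\}$, while $|A_v| = 2p \equiv 0 \pmod p$. Applying the modular Frankl--Wilson intersection theorem with $L=\{1,\ldots,p-1\}$ gives
\[
\alpha(G') \le \binom{4p}{p-1}.
\]
Since exactly $\binom{4p-1}{2p-1}$ choices of $v$ satisfy the defining conditions of $V'$, we obtain
\[
\chi(G') \ge \frac{|V'|}{\alpha(G')} \ge \frac{\binom{4p-1}{2p-1}}{\binom{4p}{p-1}}.
\]

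The last step is a routine Stirling computation: up to polynomial factors, $\binom{4p}{2p}^{1/p}\to 16$ while $\binom{4p}{p}^{1/p} \to 256/27$, so the ratio above grows like $(27/16 + o(1))^p$. Substituting $d=8p$ and using $(27/16)^{1/8} > 1.067$ yields the second inequality.

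The only technical obstacle is quoting the Frankl--Wilson modular intersection theorem in the appropriate form; beyond that, the whole argument rests on the observation that fixing one coordinate of $v$ to $+1$ forbids disjoint support sets, which is exactly the case that the modular hypothesis $|F| \not\equiv 0 \pmod p$ vs.\ $|F \cap F'| \equiv 0 \pmod p$ would otherwise fail to cover.
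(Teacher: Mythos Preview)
Your proposal is correct and takes essentially the same route as the paper: both bound $\alpha(G')$ via the Frankl--Wilson modular intersection argument after translating vectors to their $(+1)$-support sets, and both hinge on the $v_{4p}=1$ constraint to force $|A_u\cap A_v|\ge 1$. The only cosmetic difference is that the paper writes out the polynomial-method proof explicitly (obtaining $\alpha(G')\le 2\binom{4p}{p-1}$) whereas you quote Frankl--Wilson as a black box (obtaining the marginally sharper $\binom{4p}{p-1}$), a discrepancy absorbed in the $o(1)$.
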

\begin{proof}
Let $S \subseteq V'$ contain no orthogonal pair of vectors. We first show that this implies $|S| \le 2\binom{4p}{p-1}$. 
To see this let us define polynomials $f_{\textbf{v}} : \{\pm 1\}^{4p} \to \mathbb{F}_p$ for each $\textbf{v} \in V'$ as $$f_{\textbf{v}}(\textbf{x}):=\prod_{i=1}^{p-1}(\langle \textbf{x},\textbf{v}\rangle-i) \bmod p.$$ Note that $f_{\textbf{v}}(\textbf{u}) \not\equiv 0 \pmod p$ for $\textbf{v},\textbf{u} \in V'$ if and only if $\langle \textbf{v}, \textbf{u}\rangle \equiv 0 \pmod p$. Note further that $\langle \textbf{v}, \textbf{u}\rangle=4(t-p)$ where $t$ is the number of $i$ such that $v_i=u_i=1$. Since $\textbf{v},\textbf{u} \in V'$ we have $v_{4p}=u_{4p}$ so $t \ge 1$ and both $\textbf{v},\textbf{u}$ have exactly $2p$ ones. So, $f_{\textbf{v}}(\textbf{u}) \not\equiv 0 \pmod p$ can only happen if $\textbf{v}=\textbf{u}$ (if $t=2p$) or $ \textbf{v} \perp \textbf{u}$ (if $t=p$).
%, since $\langle \textbf{v}, \textbf{u}\rangle=4(t-p)$ where $t$ is the number of $i$ such that $v_i=u_i=1$. 
Now define for every $f_{\textbf{v}}(\textbf{x})$ a polynomial $\overline{f}_{\textbf{v}}(\textbf{x})$ in the same space by replacing any occurrence of the term $x_i^j$ by $x_i^{j \bmod 2}$ which are equal for $x_i \in \{\pm 1\}$ so $f_{\textbf{v}}(\textbf{u})=\overline{f}_{\textbf{v}}(\textbf{u})$ for any $\textbf{u} \in V'$. This implies the polynomials $\overline{f}_{\textbf{v}}$ for $\textbf{v} \in S$ are linearly independent over $\mathbb{F}_p$ since $\overline{f}_{\textbf{v}}(\textbf{v}) \not\equiv 0 \bmod p$ while $\overline{f}_{\textbf{v}}(\textbf{u}) \equiv 0 \bmod p$ for any distinct $\textbf{v},\textbf{u}\in S$ (since $\textbf{v}$ is not orthogonal to $\textbf{u}$ by definition of $S$). Note that the polynomials $\overline{f}_{\textbf{v}}$ are spanned by the set of monomials $\{x_{i_1}x_{i_2}\cdots x_{i_k} \mid 1\le i_1 <i_2 <\ldots<i_k\le 4p;\: k \le p-1\}$. So we found $|S|$ linearly independent elements belonging to a subspace spanned by at most $\binom{4p}{0}+\binom{4p}{1}+\ldots+\binom{4p}{p-1} \le 2\binom{4p}{p-1}$ elements, and hence $|S| \le 2\binom{4p}{p-1}$.

Finally, as in any proper colouring, every colour class spans no edges, this implies that we need at least 
$$\frac{|V'|}{2\binom{4p}{p-1}} = \frac{\binom{4p}{2p}}{4\binom{4p}{p-1}} = \left({27}/{16}+o(1)\right)^p \ge \left(1.067+o(1)\right)^d$$ 
colours to properly colour $G'$, as claimed. We note that the term $27/16$ arises from Stirling's approximation which we used to estimate the ratio of the two binomials. 
\end{proof}

We now turn to the second part of the argument, namely increasing the girth. Let $k=\chi(G')$ and let $m=2^{8pg}\cdot (4k)^{4g}\cdot (k-1)^2$. As we already argued $G'^m$ is an orthogonality graph in $\mathbb{S}^{d-1}$.  Let $n=m\cdot 2^{4p} \ge |V(G'^m)|.$ Let $G_0 \sim G'^m(q)$ be the random graph obtained by including every edge of $G'^m$ in $G_0$ with probability $q:=1/n^{1-1/(2g)}$, independently from all other edges.  The expected number of cycles of length $\ell$ in $G_0$ is at most $\binom{n}{\ell} \cdot \frac{\ell!}{2\ell} \cdot q^\ell \le (nq)^\ell/2=n^{\ell/(2g)}/2$. So, the expected number of cycles of length at most $g$ is at most $\sum_{\ell=3}^{g} n^{\ell/(2g)}/2 \le n^{1/2}$ (using that $n^{1/(2g)} \ge 2$ which holds for our choice of $n$).
By Markov's inequality, we can conclude that the probability there are more than $2\sqrt{n}$ cycles in $G_0$ is less than $1/2$.

On the other hand, if we write $m':=\frac{m}{k-1}$ we want to show that with probability less than $1/2$ there exist adjacent $\textbf{v},\textbf{u} \in V(G')$ and a pair of subsets $W,U$ of $m'$ vertices of $G'^m$, corresponding to $\textbf{v}$ and $\textbf{u}$ respectively with at most $2\sqrt{n}$ edges between $W$ and $U$. To see that note that there are at most $e(G') \cdot \binom{m}{m'}^2$ choices for $W$ and $U$ and the chance there are at most $2\sqrt{n}$ edges between them is at most $\binom{m'^2}{2\sqrt{n}}(1-q)^{m'^2-2\sqrt{n}}$. So the desired statement holds by a union bound as 
$$e(G') \cdot \binom{m}{m'}^2 \cdot \binom{m'^2}{2\sqrt{n}}(1-q)^{m'^2-2\sqrt{n}}\le 2^{8p} \cdot 2^{2m} \cdot 2^{4\sqrt{n}\log_2 m'}\cdot 2^{-qm'^2+1}<\frac12,$$
where the final inequality follows since $qm'^2 \ge m^{1+1/(2g)}/(2^{4p}k^2) > 4m\ge 8p+2m+4\sqrt{n}\log_2 m +2.$

So there exists an outcome in which graph $G_0$ has at most $2\sqrt{n}$ cycles of length up to $g$ \emph{and} for any adjacent $\textbf{v},\textbf{u} \in V(G')$ and for any pair of subsets $W,U$ of $m'$ vertices of $G'^m$, corresponding to $\textbf{v}$ and $\textbf{u}$ respectively, there are more than $2\sqrt{n}$ edges between $W$ and $U$. We obtain our graph $G$ by removing an arbitrary edge from every cycle of length up to $g$ in $G_0$ to obtain a graph with girth at least $g$ and with the property that there is an edge between any pair of subsets $W,U$ of $m'$ vertices of $G'^m$, corresponding to $\textbf{v}$ and $\textbf{u}$ respectively.

Suppose that $\chi(G) < \chi(G')=k$. This means there exists a colouring $c: V(G) \to [k-1]$ such that there are no adjacent vertices with the same colour. Now colour every vertex $\textbf{v}$ of $G'$ by the majority colour according to $c$ appearing on the $m$ vertices corresponding to $\textbf{v}$ in $V(G'^m)=V(G).$ This produces a $(k-1)$-colouring of $G'$ which can not be proper, as $\chi(G')=k$, and hence there must exist adjacent $\textbf{v},\textbf{u} \in V(G')$ given the same colour. By definition of the colouring, there exist two subsets $W,U$ of $\frac{m}{k-1}=m'$ vertices of $G'^m$, corresponding to $\textbf{v}$ and $\textbf{u}$ respectively, all given the same colour in $c$. But there exists an edge between these $W$ and $U$ by our construction, which contradicts the definition of $c$.

This completes the proof of \Cref{thm:orth} with a somewhat weaker base of the exponential (being 1.067).
Next, we obtain \Cref{main} and \Cref{thm:diam} with weaker base of the exponentials.
The same arguments also show that \Cref{thm:orth} implies \Cref{main} and \Cref{thm:diam} with bounds as stated.

We start with the classical, folklore observation that any orthogonality graph on a sphere $\mathbb{S}^{d-1}$ is also a unit distance graph in $\mathbb{R}^d$. Note that all the vertices in our set $W$ have the same norm $$||\textbf{v}(j)||^2=||\textbf{v}||^2\left(\cos^2 \frac{2\pi j}m+\sin^2 \frac{2\pi j}m\right)=||\textbf{v}||^2=4p.$$ This implies that any orthogonal pair of vectors $\textbf{v}(i),\textbf{v}(j)$ in $V$ satisfy 
\begin{equation}\label{eq:diameter}
    ||\textbf{v}(i)-\textbf{v}(j)||^2=||\textbf{v}(i)||^2+||\textbf{v}(j)||^2-2\langle\textbf{v}(i),\textbf{v}(j)\rangle = 8p.
\end{equation} So after scaling all the vectors by a factor of $\sqrt{8p}$ we obtain our graph $G$ is also a unit distance graph, proving \Cref{main} with a smaller base of the exponential. We note that, in general, if we are given an orthogonality graph in $\mathbb{R}^d$ we can scale all of its vectors to have the same norm without impacting the orthogonality relations making it an orthogonality graph on $\mathbb{S}^{d-1}$ so long as no vectors scale into the same point. This is the reason we insist on working over $\mathbb{S}^{d-1}$ rather than $\mathbb{R}^d$.

We next showcase the beautiful argument due to Kahn and Kalai which allows us to convert an orthogonality graph into a diameter graph. The starting point will again be \eqref{eq:diameter} which tells us that many points in our construction come at the same distance. The issue, however, is that this distance might not be the maximum one since $\langle\textbf{v}(i),\textbf{v}(j)\rangle$ may be negative. The brilliant insight of Kahn and Kalai here is that by using a simple tensorisation trick one can replace every vector in our collection with one in $d^2$ dimensions in such a way that the inner product of any pair of our new vectors is non-negative. Indeed, we define the vertex set of our new graph as $V \otimes V:=\{\textbf{v} \otimes \textbf{v}\mid \textbf{v} \in V\}\subseteq \R^{d^2}$ where $\textbf{v} \otimes \textbf{w}$ is defined as a $d^2$ dimensional vector whose coordinates are pairwise products of coordinates of $\textbf{v}$ and $\textbf{w}$. The key property of the tensor product hinted at above is 
$$\langle \textbf{v} \otimes \textbf{v},\textbf{w} \otimes \textbf{w}\rangle =\sum_{i,j}\textbf{v}_i\textbf{v}_j\textbf{w}_i\textbf{w}_j=\sum_i\textbf{v}_i\textbf{w}_i\cdot \sum_j\textbf{v}_j\textbf{w}_j=\langle\textbf{v},\textbf{w}\rangle^2\ge 0.$$
Now \eqref{eq:diameter} becomes $||\textbf{v} \otimes \textbf{v}-\textbf{w} \otimes \textbf{w}||^2=2\cdot (4p)^2-2\cdot \langle\textbf{v},\textbf{w}\rangle^2,$ so the maximum distance between $\textbf{v} \otimes \textbf{v},\textbf{w} \otimes \textbf{w} \in V \otimes V$ is attained if and only if $\textbf{v}$ is orthogonal to $\textbf{w}$. This shows that our graph $G$ is also a diameter graph in $\mathbb{R}^{d^2}$ and proves \Cref{thm:diam} with a smaller base of the exponential. We note that one can save a factor of about a half in the dimension in this argument by noticing that $\textbf{v} \otimes \textbf{v}$ all belong to a subspace of dimension at most $d+\binom{d}{2}$ as the product of $i$-th and $j$-th coordinate appears twice. The above argument shows that any orthogonality graph in $\mathbb{S}^{d-1}$ is also a diameter graph in $\mathbb{R}^{d^2}$. Similarly as for the unit distance graphs, the same is true for orthogonality graphs over $\mathbb{R}^d$ provided none of the vectors when divided by their norm scale into the same point.

\section{Black-box arguments}\label{sec:black}
In this section, we extract the main ideas used in the previous one. We state them in a rather general form to make them more readily available for use in the future. Finally, we combine them together with the state-of-the-art lower bounds on the chromatic number without any girth restrictions to prove our main results.

The first ingredient is the state-of-the-art construction of orthogonality graphs with large chromatic number due to Raigorodskii \cite{borsuk-best}. We note that the construction is similar in spirit to the one we presented in the previous section with the key additional idea being to use vectors with coordinates in $\{-1,0,1\}$ instead of just $\{-1,1\}$.

\begin{thm}[Raigorodskii \cite{borsuk-best}]\label{thm:exponential-chi-orthogonality}
    There exist orthogonality graphs in $\mathbb{S}^{d-1}$ with chromatic number at least \linebreak $(2/\sqrt{3}+o(1))^d \ge (1.1547+o(1))^d$.
\end{thm}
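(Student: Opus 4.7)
The plan is to follow the Frankl--Wilson style polynomial argument developed in Section~\ref{sec:self}, but to enlarge the coordinate alphabet from $\{\pm 1\}$ to $\{-1, 0, +1\}$. The extra flexibility -- essentially a new parameter, namely the number of zero coordinates -- is precisely the insight flagged by the authors preceding the statement, and is what should allow one to push the base of the exponential from the $(27/16)^{1/8}$ obtained in Section~\ref{sec:self} up to $2/\sqrt{3}$.

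Concretely, I would fix a prime $p$ together with integer parameters $a, b, c$ satisfying $a+b+c = d$, to be optimized at the end, and take
\[
V' = \bigl\{\mathbf{v} \in \{-1,0,+1\}^d : \mathbf{v} \text{ has exactly $a$ coordinates equal to $+1$, $b$ to $-1$, and $c$ to $0$}\bigr\},
\]
after fixing one coordinate (or an analogous symmetry-breaking device) so that distinct elements of $V'$ rescale to distinct points on $\mathbb{S}^{d-1}$. The orthogonality graph $G'$ on $V'$ has $|V'| = \Theta\bigl(\binom{d}{a,b,c}\bigr)$ vertices, and the goal is again $\chi(G') \ge |V'|/\alpha(G')$. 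To upper bound $\alpha(G')$, choose $(a,b,c,p)$ so that for any $\mathbf{u}, \mathbf{v} \in V'$ one has $\langle \mathbf{u}, \mathbf{v} \rangle \equiv 0 \pmod{p}$ if and only if either $\mathbf{u} = \mathbf{v}$ or $\mathbf{u} \perp \mathbf{v}$; this is a congruence condition on $(a,b,c)$ that can be arranged because the possible inner products form an arithmetic progression whose range is controlled by $a+b$. Then associate to each $\mathbf{v} \in V'$ the polynomial
\[
f_\mathbf{v}(\mathbf{x}) = \prod_{j=1}^{p-1}\bigl(\langle \mathbf{x}, \mathbf{v} \rangle - j\bigr) \bmod p
\]
and reduce it using the identity $x_i^3 = x_i$, valid for $x_i \in \{-1,0,+1\}$. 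The reduced polynomial $\overline{f}_\mathbf{v}$ lies in the space spanned by monomials $\prod_{i \in A} x_i \prod_{j \in B} x_j^2$ with $A \cap B = \emptyset$ and $|A| + 2|B| \le p-1$, and the usual triangular evaluation argument shows $\{\overline{f}_\mathbf{v}\}_{\mathbf{v} \in S}$ is linearly independent over $\mathbb{F}_p$ for any independent $S \subseteq V'$. This bounds $\alpha(G')$ by the dimension of the relevant monomial space, which is of the form $\sum_{|A|+2|B| \le p-1} \binom{d}{|A|, |B|, d - |A| - |B|}$.

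The final step is a Stirling-based joint optimization of $a/d, b/d, c/d$ and $p/d$. A direct entropic computation balances the trinomial coefficient upstairs against the constrained-monomial count downstairs and should yield $\chi(G') \ge (2/\sqrt{3} + o(1))^d$, which is exactly the desired base after rescaling to $\mathbb{S}^{d-1}$. The main obstacle is this optimization: the pleasingly clean constant $2/\sqrt{3}$ emerges only because the optimizer of the entropic expression happens to be consistent with the arithmetic alignment needed for $f_\mathbf{v}(\mathbf{u}) \equiv 0 \pmod p$ off the orthogonal pairs. Verifying this compatibility, and selecting a prime $p$ close to the required real value via standard prime-density estimates so that the asymptotic optimum is actually attained, is the delicate part of the argument.
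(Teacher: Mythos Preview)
The paper does not prove this theorem at all: it is quoted as a black box from Raigorodskii~\cite{borsuk-best}, with only the one-line hint that ``the construction is similar in spirit to the one we presented in the previous section with the key additional idea being to use vectors with coordinates in $\{-1,0,1\}$ instead of just $\{-1,1\}$.'' So there is no in-paper proof to compare your proposal against.

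That said, your outline is a faithful reconstruction of Raigorodskii's actual argument and matches the paper's hint precisely. The skeleton---choose $\mathbf{v}\in\{-1,0,1\}^d$ with a prescribed number of $+1$'s, $-1$'s and $0$'s, set up $f_{\mathbf v}(\mathbf x)=\prod_{j=1}^{p-1}(\langle\mathbf x,\mathbf v\rangle-j)$ over $\mathbb F_p$, multilinearize via $x_i^3=x_i$, bound $\alpha(G')$ by the dimension of the monomial space, and optimize the three density parameters together with $p/d$---is correct. Two caveats worth flagging. First, the condition ``$\langle\mathbf u,\mathbf v\rangle\equiv 0\pmod p$ iff $\mathbf u=\mathbf v$ or $\mathbf u\perp\mathbf v$'' is stronger than what is actually needed or achieved: one only needs that among distinct $\mathbf u,\mathbf v$ in the chosen slice, $\langle\mathbf u,\mathbf v\rangle\equiv 0\pmod p$ forces $\langle\mathbf u,\mathbf v\rangle=0$, and this is arranged by taking the support size $a+b$ close to $p$ so that the range of non-diagonal inner products has width below $2p$. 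Second, you correctly identify the optimization as the crux, but it is genuinely nontrivial: the clean $2/\sqrt 3$ does not fall out of a one-line entropy calculation, and Raigorodskii's paper carries out a careful numerical analysis to locate the optimum. Your proposal is an honest plan rather than a proof, and you are right that this last step is where the work lies.
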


The second ingredient is the fact that by doubling the number of dimensions one can embed an arbitrary blow-up of an orthogonality graph. Recall that for a graph $G$ and a positive integer $m$, the \emph{blow-up graph} $G^{m}$ is the graph obtained from $G$ by replacing each vertex with an independent set of size $m$ and each edge with a $K_{m,m}$ subgraph between the two corresponding independent sets of size $m$.

\begin{prop}\label{thm-double-dimension}
    For any $m \in \mathbb{N}$ and orthogonality graph $G$ in $\mathbb{S}^{d-1}$, $G^{m}$ is an orthogonality graph in $\mathbb{S}^{2d-1}$.
\end{prop}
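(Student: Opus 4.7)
The plan is to imitate the explicit blow-up construction from Section~2 in a general setting. Given an orthogonality embedding $\phi\colon V(G)\to \mathbb{S}^{d-1}$, I would pick $m$ angles $\theta_1,\dots,\theta_m$ (to be constrained shortly) and map each blow-up vertex $(v,j)$ to
\[
\phi_j(v) := \bigl(\phi(v)\cos\theta_j,\,\phi(v)\sin\theta_j\bigr)\in\mathbb{R}^{2d}.
\]
This is the obvious lift: the first $d$ coordinates rotate $\phi(v)$ in the plane spanned by $(\phi(v),\mathbf 0)$ and $(\mathbf 0,\phi(v))$.

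Once the map is written down, two of the three required properties are essentially automatic. First, $\|\phi_j(v)\|^2=\|\phi(v)\|^2(\cos^2\theta_j+\sin^2\theta_j)=1$, so every image lies on $\mathbb{S}^{2d-1}$. Second, for any $u,v\in V(G)$ and $i,j\in[m]$ one computes
\[
\langle \phi_i(v),\phi_j(u)\rangle = \langle\phi(v),\phi(u)\rangle(\cos\theta_i\cos\theta_j+\sin\theta_i\sin\theta_j) = \langle\phi(v),\phi(u)\rangle\cos(\theta_i-\theta_j),
\]
which vanishes whenever $\{u,v\}\in E(G)$, since then $\phi(u)\perp\phi(v)$. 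Note this holds irrespective of the choice of $\theta_j$'s, so every edge of $G^m$ maps to an orthogonal pair as required.

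The step that genuinely needs attention is distinctness of the images, because the hypothesis only says $\phi$ is an embedding of $G$ into $\mathbb{S}^{d-1}$ -- it does not preclude $\phi(v)=-\phi(u)$ for distinct vertices $u,v$ (they would be non-adjacent, so this is consistent with being an orthogonality embedding). If $\phi_i(v)=\phi_j(u)$, then squaring and summing the two coordinate blocks forces $\|\phi(v)\|=\|\phi(u)\|=1$ and, examining the blocks separately, implies $\phi(v)$ and $\phi(u)$ are parallel, so $\phi(v)=\pm\phi(u)$. The case $\phi(v)=\phi(u)$ gives $(\cos\theta_i,\sin\theta_i)=(\cos\theta_j,\sin\theta_j)$ and hence $i=j$; the case $\phi(v)=-\phi(u)$ forces $\theta_i-\theta_j\equiv\pi\pmod{2\pi}$. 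To kill this last possibility in complete generality, I would simply take the $\theta_j$ to be distinct elements of the half-open interval $[0,\pi)$, for instance $\theta_j=(j-1)\pi/m$, so that no two of them differ by $\pi$ modulo $2\pi$.

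The main (and only) obstacle is thus the antipodal collision just described; the construction from Section~2 sidestepped it by virtue of the sign constraint $\mathbf v_{d/2}=1$, but a black-box statement must handle arbitrary embeddings, and restricting the angles to a half-circle is the clean fix. Everything else is a direct verification, and since $G^m$ has no edges inside a blown-up class, we do not need to worry about whether $\phi_i(v)$ and $\phi_j(v)$ happen to be orthogonal -- the definition of orthogonality graph only requires the adjacency $\Rightarrow$ orthogonality direction.
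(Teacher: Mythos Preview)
Your proof is correct and takes essentially the same approach as the paper: each vertex $\textbf{v}$ is lifted to the plane $\spn_{\mathbb{R}}\{(\textbf{v},\textbf{0}),(\textbf{0},\textbf{v})\}\subseteq\mathbb{R}^{2d}$, and $m$ unit vectors are placed on the circle in that plane. The only difference is that the paper simply asserts the $m$ points can be chosen distinct across different $\textbf{v}$, whereas you give an explicit choice of angles in $[0,\pi)$ and handle the antipodal-collision case directly; this is welcome extra care but not a different idea.
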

\begin{proof}
    Let $G$ be an orthogonality graph in $\mathbb{S}^{d-1} \subseteq \mathbb{R}^d$ with representation $V\subseteq \mathbb{S}^{d-1}$.
    For each $\textbf{v}\in V$, let $\textbf{v}'=(\textbf{v},\textbf{0}) \in \mathbb{S}^{d-1} \times \mathbb{R}^d \subseteq \mathbb{R}^{2d}$, and $\textbf{v}''=(\textbf{0},\textbf{v}) \in \mathbb{R}^d \times \mathbb{S}^{d-1} \subseteq \mathbb{R}^{2d}$. Then, for each orthogonal pair $\textbf{u},\textbf{v}\in V$, we have that $\spn_{\mathbb{R}}\{\textbf{u}',\textbf{u}''\}$ and $\spn_{\mathbb{R}}\{\textbf{v}',\textbf{v}''\}$ are orthogonal planes in $\mathbb{R}^{2d}$.
    For each $\textbf{v}\in V$, choose distinct points $\textbf{v}_1,\ldots, \textbf{v}_m$ of unit length in $\spn_{\mathbb{R}}\{\textbf{v}',\textbf{v}''\}$ so that the points are also distinct between different $\textbf{v}\in V$. 
    Now observe that for every orthogonal pair $\textbf{u},\textbf{v}\in V$, since $\spn_{\mathbb{R}}\{\textbf{u}',\textbf{u}''\}$ and  $\spn_{\mathbb{R}}\{\textbf{v}',\textbf{v}''\}$  are orthogonal planes, we have for every $1\le i,j\le m$ that $\textbf{u}_i$ is orthogonal to $\textbf{v}_j$.
    Therefore, $G^{m}$ is an orthogonality graph in $\mathbb{S}^{2d-1}$.
\end{proof}

The probabilistic part of the argument can be captured by the following theorem see e.g.\ \cite{zhu1996uniquely}. An alternative probabilistic proof, based on the Lov\'asz Local Lemma, of a somewhat stronger result can be found in \cite{mohar2023subgraphs}.

\begin{thm}[Zhu \cite{zhu1996uniquely}]\label{lem:blowup}
    For every graph $G$ and positive integer $g$, there exists a positive integer $m$ such that $G^{m}$ contains a subgraph $G'$ with girth at least $g$ and $\chi(G')=\chi(G)$.
\end{thm}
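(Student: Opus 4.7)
The plan is to adapt the probabilistic argument from Section 2 to this more abstract setting. Write $k := \chi(G)$, $n_G := |V(G)|$, let $m$ be a large integer to be chosen last, and set $n := m \cdot n_G$, $m' := \lfloor m/(k-1) \rfloor$, and $q := 1/n^{1-1/(2g)}$. I would take $H$ to be the random subgraph of $G^m$ obtained by keeping each edge independently with probability $q$, and aim to show that with positive probability, $H$ satisfies both: (i) $H$ contains at most $2\sqrt{n}$ cycles of length at most $g$; and (ii) for every edge $uv \in E(G)$ and every pair of subsets $W,U$ of size $m'$ inside the blobs of $u$ and $v$ respectively, $H$ contains more than $2\sqrt{n}$ edges between $W$ and $U$.

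For (i), the expected number of cycles of length $\ell$ in $H$ is at most $(nq)^{\ell}/2 = n^{\ell/(2g)}/2$, so summing over $3\leq \ell \leq g$ gives $O(\sqrt{n})$, and Markov's inequality makes this event fail with probability $<1/2$. For (ii), a union bound over the at most $e(G)\binom{m}{m'}^2$ choices of $(W,U)$, each with failure probability bounded by the binomial tail $\binom{m'^2}{2\sqrt{n}}(1-q)^{m'^2 - 2\sqrt{n}}$, yields total failure probability whose logarithm is dominated by $-qm'^2$. Since $qm'^2$ is of order $m^{1+1/(2g)}/((k-1)^2 n_G^{1-1/(2g)})$, which has exponent strictly exceeding $1$ in $m$, taking $m$ sufficiently large in terms of $G$ and $g$ makes this failure probability less than $1/2$.

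Once both (i) and (ii) hold, I would delete one edge from each short cycle to produce $G' \subseteq H \subseteq G^m$ of girth at least $g$, where by (ii) every relevant $(W,U)$-bipartite piece retains an edge. To verify $\chi(G') \geq k$: a hypothetical proper $(k-1)$-colouring of $G'$ induces a colouring of $G$ by assigning each $v \in V(G)$ the majority colour of its blob; since $\chi(G)=k$, this colouring of $G$ is improper, so there is an edge $uv \in E(G)$ whose blobs share a majority colour, giving monochromatic subsets of size at least $m'$ inside each of them. Property (ii) then produces a monochromatic edge in $G'$, contradicting properness.

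The main obstacle is purely parameter wrangling in step (ii): one needs $qm'^2$ to dominate simultaneously $\log e(G)$, the $m\log 2$ coming from $\binom{m}{m'}^2$, and the $\sqrt{n}\log m'$ from the binomial coefficient. Because $qm'^2$ grows as a super-linear power of $m$ while the competing terms are at most linear or sublinear in $m$, a sufficiently large choice of $m$ clears all of them at once, giving the required $m = m(G,g)$. Alternatively, one could obtain a cleaner dependence via the Lovász Local Lemma in the spirit of \cite{mohar2023subgraphs}, but the first-moment route outlined here is enough for the qualitative statement.
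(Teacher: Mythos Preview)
Your proposal is correct and is exactly the abstraction of the Section~2 argument you set out to give; the parameter analysis goes through because $qm'^2$ grows like $m^{1+1/(2g)}$ (up to constants depending only on $G$ and $g$) while every competing term is $O(m)$ or smaller. One small omission: you only argue $\chi(G')\ge k$; the reverse inequality $\chi(G')\le k$ is immediate since $G'\subseteq G^m$ and $\chi(G^m)=\chi(G)$, but it should be stated. Also handle the degenerate case $k\le 1$ separately so that $m'=\lfloor m/(k-1)\rfloor$ makes sense.

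As for comparison with the paper: the paper does \emph{not} give its own proof of this theorem; it simply cites Zhu, and the Section~2 computation you are abstracting is presented only for the specific orthogonality construction. The paper's own contribution in this direction is the next theorem (\Cref{lem:explicit blowup}), a strengthening proved by a completely different, explicit method: an inductive Ne\v{s}et\v{r}il--R\"odl/Tutte-style construction that, at each step, removes one vertex $u$, inductively builds $G_u$ for $G-u$, and then glues many copies of $G_u$ to an independent set indexed by the vertices of an auxiliary $n$-uniform hypergraph of girth $\ge g$ and chromatic number $>k$, matching each hyperedge to $h^{-1}(N_G(u))$ in its copy. Your probabilistic route is shorter and entirely sufficient for the qualitative statement of \Cref{lem:blowup}; the paper's explicit route buys two things you do not get: a constructive (derandomised) $G'$, and the stronger ``colour pull-back'' conclusion that every proper $k$-colouring of $G'$ restricts to a proper $k$-colouring of $G$ via the homomorphism, not just that $\chi(G')=\chi(G)$.
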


In the following theorem, we prove a slight strengthening which comes with an explicit proof. Loosely speaking our result says that given any graph $G$ we can find a subgraph $G'$ of some blow-up graph of $G$ with large girth and with the property that any proper $k$-colouring of $G'$ gives rise to a proper $k$-coloring of $G$ (with the property that a colour of $v \in V(G)$ appears as the colour of some vertex in the part corresponding to $v$ in the blow-up). 

Our argument is a modification of one of Ne{\v{s}}et{\v{r}}il and R{\"o}dl \cite{nevsetvril1979short} that they used to construct graphs with large girth and chromatic number, which itself is a modification of one of Tutte \cite{descartes1947,descartes1954solution} (using the pseudonym Blanche Descartes) who constructed triangle-free graphs with large chromatic number.
As part of our argument, we will need the existence of (explicit) hypergraphs with high girth and chromatic number.
A \emph{cycle} of length $\ell \ge 2$ in a hypergraph $H$ is a tuple $(F_1, \ldots , F_\ell)$ of distinct hyperedges such that
there exist distinct elements $v_1, \ldots , v_\ell \in V(H)$ with $v_i \in F_i \cap F_{i+1}$ for $1\le i \le \ell-1$, and $v_\ell \in  F_\ell \cap F_1$.
The \emph{girth} of a hypergraph is equal to the length of a shortest cycle.
The \emph{chromatic number} of a hypergraph is defined as the minimum number of independent sets (sets not containing any edge fully) needed to partition its vertex set. The existence of hypergraphs of arbitrary uniformity with high girth and chromatic number follows from an immediate strengthening of the classical probabilistic argument of Erd\H{o}s \cite{erdos59-high-girth-high-chromatic-graph} and is due to Erd\H{o}s and Hajnal \cite{erdos-hajnal66-high-girth-high-chromatic-hypergraph}. Multiple explicit constructions have been found since see e.g.\ \cite{hypergraphs-high-chromatic-and-girth1,hypergraphs-high-chromatic-and-girth2,local-lemma-paper,hypergraphs-high-chromatic-and-girth3,lovasz-explicit,hypergraphs-high-chromatic-and-girth6,promel1988sparse,PV90}.
Given a vertex $u$ of a graph $G$, we let $N_G(u)$ denote the neighbourhood of $u$ in $G$. A \emph{homomorphism} from a graph $G'$ to a graph $G$ is a function from $V(G') \to V(G)$ such that if $vu \in E(G')$ then $h(v)h(u) \in E(G)$. Note that there exists a homomorphism from $G'$ to $G$ if and only if $G'$ is a subgraph of a blow-up of $G$.

\begin{thm}\label{lem:explicit blowup}
    For every graph $G$ and pair of positive integers $g,k$
    there exists a graph $G'$ with girth at least $g$ and a surjective homomorpishm $h$ from $G'$ to $G$ such that for any proper $k$-colouring $c'$ of $G'$, there exists a proper $k$-colouring $c$ of $G$ such that $c(v)\in c'(h^{-1}(v))$ for all $v\in V(G)$.
    In particular, $\chi(G')=\chi(G)$.
\end{thm}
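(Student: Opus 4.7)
My plan is to adapt the Ne\v{s}et\v{r}il--R\"odl partite construction, using an auxiliary high-girth, high-chromatic hypergraph $H$ as a template. Set $r := |V(G)|$, order $V(G) = (v_1, \ldots, v_r)$, and invoke one of the cited explicit results to fix an $r$-uniform hypergraph $H$ of girth at least $g$ and chromatic number greater than $k^r$. The vertex set of $G'$ will be $V(G) \times V(H)$, so the fibres $h^{-1}(v_i) = \{v_i\} \times V(H)$ are indexed by $V(H)$ and $h$ is the obvious (surjective) projection. Edges of $G'$ are put in by scanning hyperedges together with edges of $G$: for each hyperedge $F \in E(H)$ one picks a bijection $\phi_F : F \to V(G)$ and, for each $v_iv_j \in E(G)$, joins $(v_i, \phi_F^{-1}(v_i))$ to $(v_j, \phi_F^{-1}(v_j))$.

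The list-colouring conclusion then follows from a product-colouring argument. Given any proper $k$-colouring $c'$ of $G'$, form $\mathbf{c}(x) := (c'((v, x)))_{v \in V(G)} \in [k]^r$, which is a $k^r$-colouring of $V(H)$. Since $\chi(H) > k^r$, some hyperedge $F$ of $H$ must be monochromatic under $\mathbf{c}$, that is, all vertices of $F$ share a common value $\alpha \in [k]^r$. Setting $c(v) := \alpha_v$ gives $c(v) \in c'(h^{-1}(v))$ by construction, and for every $v_iv_j \in E(G)$ the corresponding $G'$-edge inside the hyperedge $F$ forces $\alpha_{v_i} = c'((v_i, \phi_F^{-1}(v_i))) \neq c'((v_j, \phi_F^{-1}(v_j))) = \alpha_{v_j}$, so $c$ is a proper colouring of $G$. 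The last sentence of the statement is then immediate: applying the list-colouring property at $k = \chi(G)$ gives $\chi(G') \geq \chi(G)$, while $G' \subseteq G^{|V(H)|}$ gives $\chi(G') \leq \chi(G)$.

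The main obstacle is controlling the girth of $G'$. The scheme above is somewhat deceptive: if one literally inserts a full copy of $G$ per hyperedge then every short cycle of $G$ reappears inside $G'$, which is useless whenever $G$ itself has small girth. The standard remedy is to thin the intra-hyperedge contribution ---~typically by insisting that the bijections $\phi_F$ come from one global rainbow labelling $\psi : V(H) \to V(G)$ (so $\phi_F = \psi|_F$ for every $F$), together with orientation conventions so that each hyperedge hosts at most a single cleverly chosen edge per edge of $G$. The girth analysis then proceeds by showing that any would-be cycle of $G'$ of length $\ell$ traverses a sequence of hyperedges whose fibre-coordinate projection is a closed walk in $H$ that shortens to an actual cycle of $H$ of length at most $\ell$, so $\mathrm{girth}(H) \geq g$ forces $\ell \geq g$; the linearity of the explicit hypergraph template is what guarantees this collapse. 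If a single template proves too rigid one can iterate in stage-by-stage Ne\v{s}et\v{r}il--R\"odl fashion, running the same product-colouring argument at each stage to propagate the list-colouring property while increasing the girth.
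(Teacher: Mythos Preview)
Your product-colouring argument is correct and elegant: once a hyperedge $F$ is $\mathbf{c}$-monochromatic, the copy of $G$ sitting on $F$ forces $\alpha$ to be a proper colouring of $G$ with $\alpha_v\in c'(h^{-1}(v))$. The problem is the girth, and here there is a genuine gap.

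As you yourself note, the construction as written places a \emph{full copy of $G$} on every hyperedge, so every short cycle of $G$ survives in $G'$. Your proposed remedy does not fix this. Replacing the individual bijections $\phi_F$ by a single global partition $\psi:V(H)\to V(G)$ only makes $H$ $r$-partite; it does not change the edge set you add over a single hyperedge, which is still an isomorphic copy of $G$. The phrase ``orientation conventions so that each hyperedge hosts at most a single cleverly chosen edge per edge of $G$'' is exactly one edge per edge of $G$, i.e.\ again a full copy of $G$; and if you thin further so that a hyperedge carries strictly fewer edges, the product-colouring step no longer produces a proper colouring of all of $G$ from a single monochromatic $F$. Your closed-walk projection argument handles cycles that visit at least two hyperedges, but says nothing about cycles confined to one hyperedge, and those are precisely the obstruction. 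The final sentence (``iterate stage-by-stage'') points in the right direction but is not a proof: you have not said what the stages are, what invariant you maintain, or why the girth increases.

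The paper resolves this by inducting on $|V(G)|$ rather than trying to embed all of $G$ at once. One removes a vertex $u$, obtains by induction a high-girth $G_u$ with the list-colouring property for $G-u$, and then takes an $n$-uniform hypergraph $H$ of girth $\ge g$ and chromatic number merely $\ge k+1$, where $n=|h_u^{-1}(N_G(u))|$. One places a disjoint copy $G_F$ of $G_u$ for each $F\in E(H)$, lets $U=V(H)$ be an independent set mapping to $u$, and adds only a \emph{matching} between $F$ and $h_F^{-1}(N_G(u))$. Because the only new edges are matchings into an independent set, no new short cycle can live inside one hyperedge; any short cycle must revisit $U$, and then the same projection argument you sketched yields a short cycle in $H$. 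For the colouring, a monochromatic hyperedge (colour $a$) forces $a\notin c'(h_F^{-1}(N_G(u)))$, so the inductive colouring of $G-u$ extends by $c(u)=a$. The key structural difference from your attempt is that the induction lets you attach only matchings at each step, never a whole copy of $G$, and that is what makes the girth analysis go through.
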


\begin{proof}
    If $|V(G)|\le 2$ the theorem is immediate since we may just take $G'=G$ and let $h$ be the identity. So, with this observation serving as a basis, we shall argue by induction on the number of vertices (while keeping $g$ and $k$ fixed).
    Let $u$ be a vertex of $G$, then we may assume that the theorem holds for $G-u$. Now, let $G_u$ be a graph with girth at least $g$ and $h_u$ a surjective homomorphism  from $G_u$ to $G - u$ such that for any proper $k$-colouring $c_u$ of $G_u$, there exists a proper $k$-colouring $c$ of $G-u$ such that $c(v)\in c_u(h_u^{-1}(v))$ for all $v\in V(G-u)$.

    Let $n=|h_u^{-1}(N_G(u))|$, and let $H$ be an auxiliary $n$-uniform hypergraph with chromatic number at least $k+1$ and girth at least $g$. 
    For each $F\in E(H)$, let $G_F$, $h_F$ be copies of $G_u,h_u$, where the $G_F$'s are taken to be vertex disjoint.
    
    We construct our desired $G'$ by taking a vertex disjoint union of all $G_F$ together with a disjoint independent set $U$ whose vertices we identify with $V(H)$. In addition, for each $F\in E(H)$, we add an arbitrary matching between the vertices of $U$ corresponding to those of $F$ and $h_F^{-1}(N_{G}(u))\subseteq V(G_F)$. 
    We define our desired $h:V(G')\to V(G)$ by letting $h(v)=h_F(v)$ for all $v\in V(G_F)$ and $h(v)=u$ for all $v\in U$. Since $h_F$ is a surjective homomorphism to $G-u$, so is $h$ to $G$.
    For an illustration of this construction, see Figure 1.

    \begin{figure}[h]\label{figure}
\caption{An illustration of the construction for $G'$ when $G$ is the Moser spindle.
The labels of the vertices of $G_u$ gives the homomorphism from $G_u$ to $G-u$.
For simplicity, we only show how one copy $G_F$ of $G_u$ attaches to vertices $F$ of $U$ in $G'$. In reality, we place one such copy for every edge of $H$.}
\centering

\includegraphics[width=0.75\textwidth]{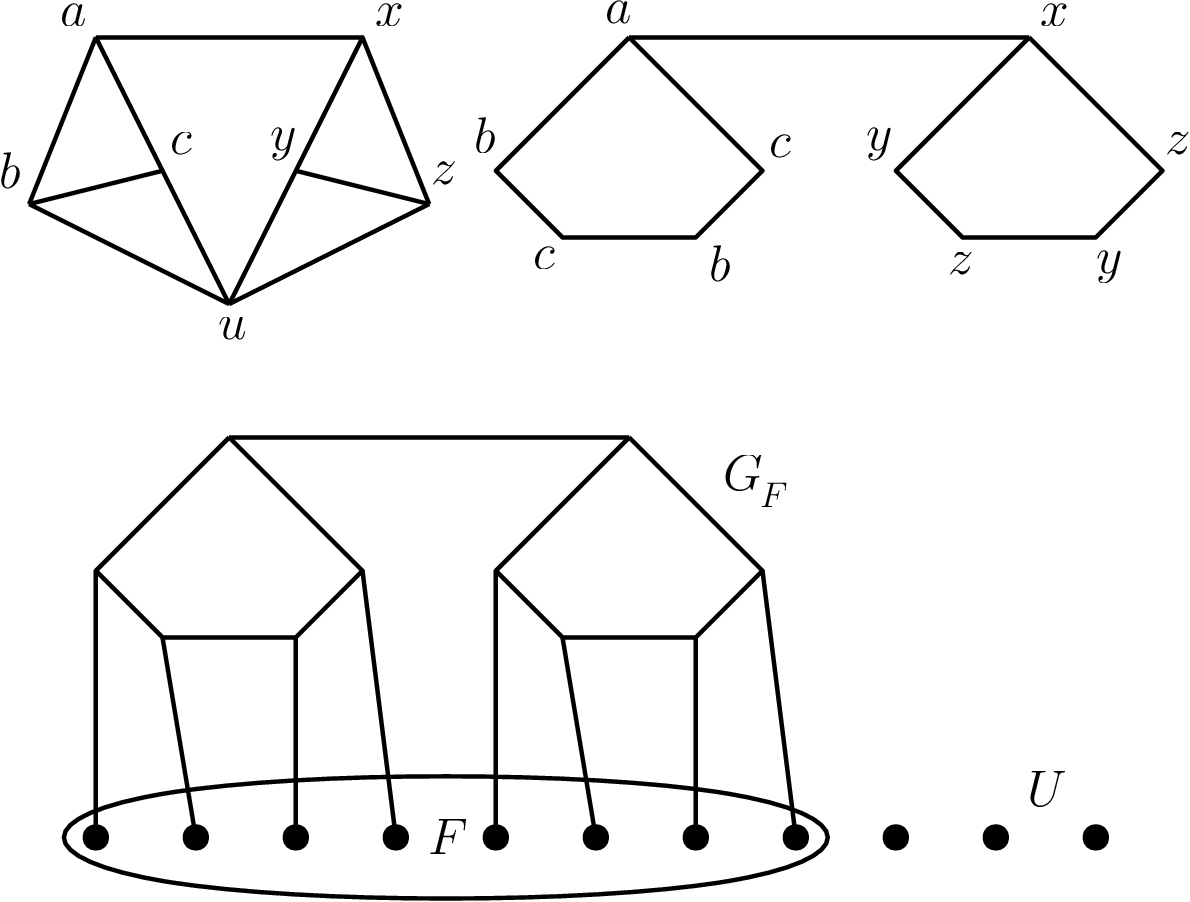}
\end{figure}

    Consider a cycle $C$ in $G'$. If $C$ is contained in some $G_F$, then clearly $C$ has length at least $g$.
    Otherwise, if $C$ is not contained in some $G_F$, then $C$ must contain some vertices $u_1,\ldots, u_r$ of $U$ which we label according to the order in which they appear on $C$ (choosing $u_1$ and direction arbitrarily). Observe that for each $1\le i \le r$, the internal vertices of the subpath of $C$ between $u_i$ and $u_{i+1}$ (taking $u_{r+1}=u_1$) are contained in a unique $G_{F_i}$, where $F_i\in E(H)$, with $u_i,u_{i+1}\in F_i$. This follows since $u_i$ and $u_{i+1}$ can not be adjacent on the cycle (since they belong to $U$ which is independent) and since there are no edges between distinct $G_F$. Furthermore, for each $i$ we have that $F_i\not= F_{i+1}$ (again taking $F_{r+1}=F_1$), since the edge from $u_{i+1}$ to $G_{F_i}$ is unique and can be traversed only once by $C$.
    Therefore, $H$ has a cycle consisting of (not necessarily all of) the edges $F_1,\ldots, F_r$.
    Since $H$ has girth at least $g$, it follows that $r\ge g$.
    Therefore, in either case, $C$ has length at least $g$.
    So, $G'$ has girth at least $g$.

    Consider some proper $k$-colouring $c'$ of $G'$. Restriction of $c'$ to $U$ gives a $k$-coloring of $H$, which has chromatic number at least $k+1$, so some edge of $H$ must be monochromatic in this coloring. Namely, there is some colour $a$ and some $F\in E(H)$ such that $c'(F)=\{a\}$.
    Therefore, since $c'$ was proper, and by our construction there is a matching between $F \subseteq U$ and $h_F^{-1}(N_{G}(u))$, we conclude  $c'(h_F^{-1}(N_{G}(u)))\subseteq \{1,\ldots, k\} \backslash \{a\}$. So, by applying the inductive assumption to the restriction of $c'$ to $G_F$ we obtain a proper $k$-colouring $c$ of $G-u$ such that $c(v)\in c'(h^{-1}(v))$ for all $v\in V(G-u)$. Since $a \notin c'(h_F^{-1}(N_{G}(u)))$ we can extend $c$ by letting $c(u)=a$ to obtain a desired proper $k$-coloring of $G$.
    
    This, in particular, implies that if there exists a proper $k$-colouring of $G'$, there also exists a proper $k$-coloring of $G$, so $\chi (G') \ge \chi (G)$. On the other hand, since there is a homomorphism from $G'$ to $G$, we have that $\chi(G')\le \chi(G)$, so $\chi(G') = \chi(G)$.
\end{proof}

\textbf{Remark.} In private communication, Alon suggested an alternative way to prove \Cref{lem:blowup} explicitly. The basic idea is to start with a high-girth Ramanujan graph (such as an LPS-graph \cite{ramanujan}) on $|G|m$ vertices. One can then assign to each vertex of $G$ a disjoint set of $m$ of its vertices and remove any edges within parts or between parts corresponding to non-adjacent vertices of $G$. One can then show, using a similar argument as we did with random subsampling, that the chromatic number of this graph will remain the same as that of $G$, provided $m$ is chosen to be sufficiently large.

By combining \Cref{thm:exponential-chi-orthogonality} with \Cref{thm-double-dimension} and \Cref{lem:explicit blowup}, we obtain \Cref{thm:orth} (since $\sqrt{1.1547} =1.074569...$).
As discussed at the end of \Cref{sec:self}, \Cref{thm:orth} then implies \Cref{main} and \ref{thm:diam} (where $1.074569^{\sqrt{2}}=1.107...$).
%Given this, by following the discussion at the end of the previous section this also implies \Cref{main,thm:diam}.

\section{Concluding remarks}\label{sec:con}
In this paper, we proved that there exist unit distance and orthogonality graphs in $\R^d$ and diameter graphs in $\R^{d^2}$ with chromatic number exponential in $d$ and with arbitrarily large girth.

We note that there is another natural notion of unit distance graphs  (and similarly for diameter and orthogonality graphs), sometimes referred to as faithful or induced unit distance graphs in which one insists that any pair of points in $\R^d$ at unit distance do make an edge. Unfortunately, due to the fact we subsample edges, our arguments do not apply in this setting and it remains a very interesting open problem to find faithful unit distance graphs with large girth and chromatic number. We point the reader to a paper of Alon and Kupavskii \cite{alon-kupavskii} collecting the differences between the two notions.

We note also that our \Cref{thm-double-dimension} allows us to embed (faithfully) in $\mathbb{S}^{2d-1}$ a blow-up of any (faithful) orthogonality graph in $\mathbb{S}^{d-1}$. This is in general tight since one can embed a complete graph $K_d$ as an orthogonality graph in $\mathbb{S}^{d-1}$ but to embed a complete $d$-partite graph with three vertices in every part onto a sphere one needs at least $2d$ dimensions. Indeed, each triple of vertices making a part defines a plane which should be orthogonal to all other such planes. We thank an anonymous referee for pointing this out.

Another interesting direction which arises is the fact that not all unit distance graphs are orthogonality graphs (of the same dimension), so it might be interesting to determine whether it is possible to prove some analogue of \Cref{thm-double-dimension} for unit distance graphs. If so then one could use the slightly better constructions known for unit distance graphs compared to orthogonality ones and obtain a very slight improvement over our \Cref{main}. Understanding both this and the previous question might help in improving the bounds on the Borsuk Problem without any girth conditions.

\section*{Acknowledgements}

We thank Noga Alon for sharing his alternative explicit proof of \Cref{lem:blowup} and thank Jim Geelen for helpful discussions on topics related to \Cref{lem:blowup}. We also thank Andrei M. Raigorodskii for pointing out that the orthogonality graphs he constructs in \cite{borsuk-best} come with a slightly worse base constant than we initially stated. 
We are grateful to the anonymous referees for their careful reading of the paper and many useful suggestions.

\providecommand{\MR}[1]{}
\providecommand{\MRhref}[2]{%
  \href{http://www.ams.org/mathscinet-getitem?mr=#1}{#2}
}

%\bibliographystyle{amsplain_initials_nobysame}
%\bibliography{unitdistance}

\begin{thebibliography}{10}

\bibitem{hypergraphs-high-chromatic-and-girth1}
N.~Alon, A.~Kostochka, B.~Reiniger, D.~B. West, and X.~Zhu, \emph{Coloring, sparseness and girth}, Israel J. Math. \textbf{214} (2016), no.~1, 315--331. \MR{3540616}

\bibitem{alon-kupavskii}
N.~Alon and A.~Kupavskii, \emph{Two notions of unit distance graphs}, J. Combin. Theory Ser. A \textbf{125} (2014), 1--17. \MR{3207464}

\bibitem{borsuk}
K.~Borsuk, \emph{Drei s{\"a}tze {\"u}ber die n-dimensionale euklidische sph{\"a}re}, Fundam. Math. \textbf{20} (1933), no.~1, 177--190.

\bibitem{bourgain-lindenstrauss}
J.~Bourgain and J.~Lindenstrauss, \emph{On covering a set in {${\bf R}^N$} by balls of the same diameter}, Geometric aspects of functional analysis (1989--90), Lecture Notes in Math., vol. 1469, Springer, Berlin, 1991, pp.~138--144. \MR{1122618}

\bibitem{Davies2021Box}
J.~Davies, \emph{Box and segment intersection graphs with large girth and chromatic number}, Adv. Comb. (2021), Paper No. 7, 9. \MR{4292574}

\bibitem{davies2021solution}
J.~Davies, C.~Keller, L.~Kleist, S.~Smorodinsky, and B.~Walczak, \emph{A solution to {R}ingel's circle problem}, to appear in J. Eur. Math. Soc (2021).

\bibitem{de-Bruijn-Erdos}
N.~G. de~Bruijn and P.~Erd\"{o}s, \emph{A colour problem for infinite graphs and a problem in the theory of relations}, Indag. Math. \textbf{13} (1951), 369--373, Nederl. Akad. Wetensch. Proc. Ser. A {\bf 54}. \MR{46630}

\bibitem{demekhin2012distance}
E.~Demekhin, A.~Ra\u{\i}gorodski\u{\i}, and O.~Rubanov, \emph{Distance graphs having large chromatic numbers and not containing cliques or cycles of given size}, Sb. Math. \textbf{204} (2013), 508--538.

\bibitem{descartes1947}
B.~Descartes, \emph{A three colour problem}, Eureka \textbf{9} (1947), no.~21, 24--25.

\bibitem{descartes1954solution}
B.~Descartes, \emph{Solution to advanced problem no. 4526}, Amer. Math. Monthly \textbf{61} (1954), no.~352, 216.

\bibitem{hypergraphs-high-chromatic-and-girth2}
D.~Duffus, V.~R\"{o}dl, B.~Sands, and N.~Sauer, \emph{Chromatic numbers and homomorphisms of large girth hypergraphs}, Topics in discrete mathematics, Algorithms Combin., vol.~26, Springer, Berlin, 2006, pp.~455--471. \MR{2249281}

\bibitem{dusart}
P.~Dusart, \emph{The {$k$}-th prime is greater than {$k(\ln k+\ln\ln k-1)$} for {$k\geq2$}}, Math. Comp. \textbf{68} (1999), no.~225, 411--415. \MR{1620223}

\bibitem{erdos59-high-girth-high-chromatic-graph}
P.~Erd\H{o}s, \emph{Graph theory and probability}, Canadian J. Math. \textbf{11} (1959), 34--38. \MR{102081}

\bibitem{erdos-hajnal66-high-girth-high-chromatic-hypergraph}
P.~Erd\H{o}s and A.~Hajnal, \emph{On chromatic number of graphs and set-systems}, Acta Math. Acad. Sci. Hungar. \textbf{17} (1966), 61--99. \MR{193025}

\bibitem{local-lemma-paper}
P.~Erd\H{o}s and L.~Lov\'{a}sz, \emph{Problems and results on {$3$}-chromatic hypergraphs and some related questions}, Infinite and finite sets ({C}olloq., {K}eszthely, 1973; dedicated to {P}. {E}rd\H{o}s on his 60th birthday), {V}ols. {I}, {II}, {III}, Colloq. Math. Soc. J\'{a}nos Bolyai, vol. Vol. 10, North-Holland, Amsterdam-London, 1975, pp.~609--627. \MR{382050}

\bibitem{erdos1975unsolved}
P.~Erd{\H{o}}s, \emph{Unsolved problems}, Congress Numerantium XV--Proceedings of the 5th British Comb. Conf., 1975, p.~681.

\bibitem{frankl-wilson}
P.~Frankl and R.~M. Wilson, \emph{Intersection theorems with geometric consequences}, Combinatorica \textbf{1} (1981), no.~4, 357--368. \MR{647986}

\bibitem{graham}
R.~L. Graham, \emph{Euclidean ramsey theory}, Handbook of Discrete and Computational Geometry, Chapman and Hall/CRC, 2017, pp.~281--297.

\bibitem{hadwiger-nelson}
H.~Hadwiger, \emph{Ein {U}eberdeckungssatz f\"{u}r den {E}uklidischen {R}aum}, Portugal. Math. \textbf{4} (1944), 140--144. \MR{11108}

\bibitem{hochberg}
R.~Hochberg and P.~O'Donnell, \emph{Some {$4$}-chromatic unit-distance graphs without small cycles}, Geombinatorics \textbf{5} (1996), no.~4, 137--141. \MR{1380143}

\bibitem{kahn-kalai}
J.~Kahn and G.~Kalai, \emph{A counterexample to {B}orsuk's conjecture}, Bull. Amer. Math. Soc. (N.S.) \textbf{29} (1993), no.~1, 60--62. \MR{1193538}

\bibitem{hypergraphs-high-chromatic-and-girth3}
A.~V. Kostochka and J.~Ne\v{s}et\v{r}il, \emph{Properties of {D}escartes' construction of triangle-free graphs with high chromatic number}, Combin. Probab. Comput. \textbf{8} (1999), no.~5, 467--472. \MR{1731981}

\bibitem{borsuk-alternative}
A.~Kupavskii and A.~Raigorodskii, \emph{Counterexamples to {B}orsuk's conjecture on spheres of small radii}, Mosc. J. Comb. Number Theory \textbf{2} (2012), no.~4, 27--48. \MR{3065279}

\bibitem{kupavskii2012distance}
A.~B. Kupavskii, \emph{Distance graphs with large chromatic number and arbitrary girth}, Mosc. J. Comb. Number Theory \textbf{2} (2012), no.~2, 52--62. \MR{2988526}

\bibitem{schur}
A.~B. Kupavskii and A.~Polyanskii, \emph{Proof of {S}chur's conjecture in {$\Bbb R^D$}}, Combinatorica \textbf{37} (2017), no.~6, 1181--1205. \MR{3759913}

\bibitem{larman-rogers}
D.~G. Larman and C.~A. Rogers, \emph{The realization of distances within sets in {E}uclidean space}, Mathematika \textbf{19} (1972), 1--24. \MR{319055}

\bibitem{lovasz-explicit}
L.~Lov\'{a}sz, \emph{On chromatic number of finite set-systems}, Acta Math. Acad. Sci. Hungar. \textbf{19} (1968), 59--67. \MR{220621}

\bibitem{ramanujan}
A.~Lubotzky, R.~Phillips, and P.~Sarnak, \emph{Ramanujan graphs}, Combinatorica \textbf{8} (1988), no.~3, 261--277. \MR{963118}

\bibitem{mohar2023subgraphs}
B.~Mohar and H.~Wu, \emph{Subgraphs of {K}neser graphs with large girth and large chromatic number}, Art Discrete Appl. Math. \textbf{6} (2023), no.~2, Paper No. 2.11, 7. \MR{4556991}

\bibitem{nevsetvril1979short}
J.~Ne\u~set\u ril and V.~e. R\"odl, \emph{A short proof of the existence of highly chromatic hypergraphs without short cycles}, J. Combin. Theory Ser. B \textbf{27} (1979), no.~2, 225--227. \MR{546865}

\bibitem{nesetril-survey}
J.~Ne\u{s}et\u{r}il, \emph{A combinatorial classic---sparse graphs with high chromatic number}, Erd\"{o}s centennial, Bolyai Soc. Math. Stud., vol.~25, J\'{a}nos Bolyai Math. Soc., Budapest, 2013, pp.~383--407. \MR{3203606}

\bibitem{hypergraphs-high-chromatic-and-girth6}
J.~Ne\u{s}et\u{r}il and V.~R\"{o}dl, \emph{On a probabilistic graph-theoretical method}, Proc. Amer. Math. Soc. \textbf{72} (1978), no.~2, 417--421. \MR{507350}

\bibitem{nilli}
A.~Nilli, \emph{On {B}orsuk's problem}, Jerusalem combinatorics '93, Contemp. Math., vol. 178, Amer. Math. Soc., Providence, RI, 1994, pp.~209--210. \MR{1310585}

\bibitem{odonell}
P.~O'Donnell, \emph{Arbitrary girth, 4-chromatic unit distance graphs in the plane. {I}. {G}raph description}, Geombinatorics \textbf{9} (2000), no.~3, 145--152. \MR{1746081}

\bibitem{odonnell2}
P.~O'Donnell, \emph{Arbitrary girth, 4-chromatic unit distance graphs in the plane. {II}. {G}raph embedding}, Geombinatorics \textbf{9} (2000), no.~4, 180--193. \MR{1763978}

\bibitem{PACH202429}
J.~Pach, G.~Tardos, and G.~T\'oth, \emph{Disjointness graphs of short polygonal chains}, J. Combin. Theory Ser. B \textbf{164} (2024), 29--43. \MR{4642963}

\bibitem{PV90}
H.~J. Pr\"omel and B.~Voigt, \emph{A sparse {G}allai-{W}itt theorem}, Topics in combinatorics and graph theory ({O}berwolfach, 1990), Physica, Heidelberg, 1990, pp.~747--755. \MR{1100099}

\bibitem{promel1988sparse}
H.~J. Pr\"omel and B.~Voigt, \emph{A sparse {G}raham-{R}othschild theorem}, Trans. Amer. Math. Soc. \textbf{309} (1988), no.~1, 113--137. \MR{957064}

\bibitem{prosanov2019counterexamples}
R.~I. Prosanov, \emph{Counterexamples to {B}orsuk's conjecture that have large girth}, Mat. Zametki \textbf{105} (2019), no.~6, 890--898. \MR{3954318}

\bibitem{prosanov-hadwiger-nelson}
R.~Prosanov, \emph{A new proof of the {L}arman-{R}ogers upper bound for the chromatic number of the {E}uclidean space}, Discrete Appl. Math. \textbf{276} (2020), 115--120. \MR{4075526}

\bibitem{raigorodskii-bound}
A.~M. Ra\u{\i}gorodski\u{\i}, \emph{On the chromatic number of a space}, Uspekhi Mat. Nauk \textbf{55} (2000), no.~2(332), 147--148. \MR{1781075}

\bibitem{hadwiger-nelson-survey}
A.~M. Ra\u{\i}gorodski\u{\i}, \emph{The {E}rd{\H{o}}s-{H}adwiger problem and the chromatic numbers of finite geometric graphs}, Mat. Sb. \textbf{196} (2005), no.~1, 123--156. \MR{2141326}

\bibitem{raigorodskii07}
A.~M. Ra\u{\i}gorodski\u{\i}, \emph{On distance graphs with large chromatic number but without large simplices}, Russ. Math. Surv. \textbf{62} (2007), no.~6, 1224.

\bibitem{raigorodski-survey}
A.~M. Ra\u{\i}gorodski\u{\i}, \emph{Coloring distance graphs and graphs of diameters}, Thirty essays on geometric graph theory, Springer, New York, 2013, pp.~429--460. \MR{3205167}

\bibitem{raigorodskii-shabanov-survey}
A.~M. Ra\u{\i}gorodski\u{\i} and D.~A. Shabanov, \emph{The {E}rd{\H{o}}s-{H}ajnal problem of hypergraph colouring, its generalizations, and related problems}, Russ. Math. Surv. \textbf{66} (2011), no.~5, 933.

\bibitem{borsuk-best}
A.~M. Ra\u{\i}gorodski\u{\i}, \emph{On a bound in {B}orsuk's problem}, Uspekhi Mat. Nauk \textbf{54} (1999), no.~2, 185--186.

\bibitem{borsuk-survey}
A.~M. Ra\u{\i}gorodski\u{\i}, \emph{Three lectures on the {B}orsuk partition problem}, Surveys in contemporary mathematics, London Math. Soc. Lecture Note Ser., vol. 347, Cambridge Univ. Press, Cambridge, 2008, pp.~202--247. \MR{2388494}

\bibitem{rubanov}
O.~I. Rubanov, \emph{Chromatic numbers of three-dimensional distance graphs without tetrahedra}, Mat. Zametki \textbf{82} (2007), no.~5, 797--800. \MR{2399959}

\bibitem{sagdeev2017lower}
A.~A. Sagdeev, \emph{On lower bounds for the chromatic numbers of distance graphs with large girth}, Mat. Zametki \textbf{101} (2017), no.~3, 430--445. \MR{3635434}

\bibitem{sagdeev2018improved}
A.~A. Sagdeev, \emph{An improved {F}rankl-{R}\"odl theorem and some of its geometric consequences}, Problemy Peredachi Informatsii \textbf{54} (2018), no.~2, 45--72. \MR{3845496}

\bibitem{sagdeev2019frankl}
A.~A. Sagdeev and A.~M. Raigorodskii, \emph{On a {F}rankl-{W}ilson theorem and its geometric corollaries}, Acta Math. Univ. Comenian. (N.S.) \textbf{88} (2019), no.~3, 1029--1033. \MR{4012909}

\bibitem{schramm}
O.~Schramm, \emph{Illuminating sets of constant width}, Mathematika \textbf{35} (1988), no.~2, 180--189. \MR{986627}

\bibitem{wormald}
N.~Wormald, \emph{A {$4$}-chromatic graph with a special plane drawing}, J. Austral. Math. Soc. Ser. A \textbf{28} (1979), no.~1, 1--8. \MR{541161}

\bibitem{zhu1996uniquely}
X.~Zhu, \emph{Uniquely {$H$}-colorable graphs with large girth}, J. Graph Theory \textbf{23} (1996), no.~1, 33--41. \MR{1402136}

\end{thebibliography}

\providecommand{\bysame}{\leavevmode\hbox to3em{\hrulefill}\thinspace}
\providecommand{\MR}{\relax\ifhmode\unskip\space\fi MR }
% \MRhref is called by the amsart/book/proc definition of \MR.
\providecommand{\MRhref}[2]{%
  \href{http://www.ams.org/mathscinet-getitem?mr=#1}{#2}
}
\providecommand{\href}[2]{#2}

\end{document}